\newtheorem{thm}{Theorem}[section]
\newtheorem{lemma}[thm]{Lemma}
\newtheorem{prop}[thm]{Proposition}
\newtheorem{defin}[thm]{Definition}
\newtheorem{definp}[thm]{Definition/Proposition}
\theoremstyle{remark}
\newtheorem{rem}[thm]{Remark}
\newtheorem{ex}[thm]{Example}
\newenvironment{remark}{\begin{rem}\rm}{\qee\end{rem}}
\newcommand{\Hom}{\operatorname{Hom}}
\newcommand{\cO}{{\mathcal O}}
\newcommand{\PP}{{\mathbb P}}
\newcommand{\AAA}{{\mathbb A}}
\def\co{\mathcal O}
\newcommand{\R}{{\mathbb R}}
\newcommand{\C}{{\mathbb C}}
\newcommand{\Z}{{\mathbb Z}}
\newcommand{\Q}{{\mathbb Q}}
\newcommand{\qee}{\mbox{\hspace{0.2mm}}\hfill$\triangle$}
\newcommand{\var}{{\PP_\Sigma}}
\begin{document}
\begin{flushright} SISSA Preprint 78/2010/fm \\ {\tt arXiv:1011.1003}\end{flushright}
\bigskip
\title[Picard group of  hypersurfaces in   toric 3-folds]{Picard group of  hypersurfaces\\[5pt] in   toric 3-folds}
\author{\small Ugo Bruzzo$^{\P\S\dag}$ and Antonella Grassi$^\P$}
\address{\rm $^\P$ Department of Mathematics, University of Pennsylvania,\\
David Rittenhouse Laboratory, 209 S 33rd Street,\\ Philadelphia, PA 19104, USA\footnote{$^\dag$ On leave of absence from Scuola Internazionale Superiore di Studi Avanzati, Via Bonomea 265, 34136 Trieste, Italy}\footnote{Support for this work was provided by the NSF Research Training Group Grant
DMS-0636606, by {\sc prin} ``Geometria delle variet\`a  algebriche
e dei loro spazi dei moduli '' and  the {\sc infn} project {\sc pi14} ``Nonperturbative dynamics of gauge theories''. U.B. is a member of the {\sc vbac} group.}}
\address{\rm $^\S$ Istituto Nazionale di Fisica Nucleare, Sezione di Trieste}
\thanks{E-mail: {\tt bruzzo@math.upenn.edu, bruzzo@sissa.it, grassi@sas.upenn.edu}}
\date{\today}
\subjclass{14C22,14J70, 14M25}
\begin{abstract} We show that the usual  sufficient criterion for a very general hypersurface in a smooth projective manifold to have the same Picard number as the ambient variety can be generalized to quasi-smooth hypersurfaces in complete simplicial toric varieties. This sufficient condition always holds for very general K3 surfaces embedded in Fano toric 3-folds.
\end{abstract}

\maketitle \newpage

\section{Introduction}

In this paper we study the Noether-Lefschetz problem for hypersurfaces in complete simplicial toric threefolds, namely, we prove that under a certain condition, a very general hypersurface in an ample linear system in  such a toric threefold $\var$ has the same Picard number as $\var$.
In particular, this holds
for a very general K3 hypersurface in the   anticanonical system of a simplicial toric Fano threefold.
(A property is very general if it holds in the
complement of countably many proper closed subvarieties \cite{Lazarsfeld}.)

This result can be regarded on one hand as a first step towards the  study of Noether-Lefschetz loci of the moduli space of $K3$ hypersurfaces in a  toric Fano threefolds; see also the recent works of \cite{KMPS, MPP,Kloos07}. On the other hand, this completes the picture for computing the Picard number for certain hypersurfaces in the anticanonical system of a toric Fano variety, by handling the unknown case in dimension $3$.

Recall that the Picard number, $\rho(Y)$ of a variety $Y$ is the rank of the N\'{e}ron-Severi group, that is of the image of the Picard group in the second cohomology group with integer coefficients. The Picard group and the Picard number of a toric variety $\var$ can be easily computed from the combinatorial data of $\Sigma$.
Let $X$ be a nondegenerate hypersurface  in the anticanonical system of a simplicial  toric Fano  variety $\var$, with $\dim \var \geq 4$ (note that  a general hypersurface is also nondegenerate).
  In the 80s and 90s it was shown \cite{CoKa99, Bat94, DK,AGM}, that the Picard number of any such $X$ can be explicitly computed from combinatorial data.   This result was a pivotal ingredient in describing the toric version of mirror symmetry (see for example \cite{CoKa99}).
The argument in the above papers is essentially topological and computes the dimension of the second cohomology group of $X$, which happens to be equal to $\rho(X)$ if $\dim (X) \geq 3$, but not necessarily if $\dim (X)=2$. In addition, even the  statement in the above papers   does not hold when $\dim \var =3$, as we see from the case of   Fermat's quartic in $\PP ^3$, which is nondegenerate.

This type of result was generalized  by Roan
to the case of toric varieties (not necessarily Fano) also for the case
when the ambient variety has dimension $d\ge 4$ \cite{Roan96}, and by
Ravindra and Srinivas to general normal varieties, still with the restriction   $d\ge 4$ \cite{RaviSri06}.

This paper then  fills the gap for $\dim \var = 3$.  It was already  known that  $\rho (X)= \rho(\var)$ for particular cases of toric Fano threefolds, namely certain weighted projective spaces \cite{CoxWeighted,SJ,deJongSteen},  as in the higher dimensional case.
 The techniques used in the case of weighted projective spaces are very much tailored to that specific case \cite{Dolgy,SJ,deJongSteen}.
On the other hand, the classical infinitesimal techniques introduced in the 70s by Griffiths, Steenbrink and collaborators to solve the Noether-Lefschetz problem in the smooth case (see for example \cite{CMP03}) cannot be used due to the presence of singularities.
Our argument is partly inspired by Cox's paper \cite{CoxWeighted}: it  generalizes the classical infinitesimal techniques and combines them with more recent results about toric varieties, their Cox ring and their cohomology \cite{BaCox94}. In fact, $X$ and $\var$ are projective orbifolds, and a pure Hodge structure can be defined for them; this will be a key tool in the proof.

In Section \ref{background} we mostly recall some relevant results from \cite{BaCox94}, and   adapt them to the set up of \cite{CMP03}. We start with basic properties of simplicial toric varieties and general hypersurfaces defined by ample divisors.  Moreover we note that the exact sequence defining the primitive cohomology in middle dimension of such a hypersurface splits orthogonally  with respect to the intersection pairing. The middle cohomology is the sum of the primitive cohomology and the ``fixed'' cohomology, i.e., the cohomology inherited from the ambient toric variety; the splitting is consistent with the Hodge decomposition. We then state some results of \cite{BaCox94} which express the primitive cohomology in middle degree in terms of the Jacobian ring of the hypersurface; here we assume that ambient space has odd dimension.

Section \ref{Pic} contains the bulk of the argument: we proceed along the lines of the infinitesimal  arguments of Griffiths for smooth varieties and adapt it to the toric case. We start from the moduli space of quasi-smooth hypersurfaces constructed in \cite{BaCox94}, consider a natural Gauss-Manin connection,   proceed to  prove an infinitesimal Noether-Lefschetz theorem and then the needed global Noether-Lefschetz theorem.
Finally, we focus on the case of $K3$ hypersurfaces in the anticanonical system of a simplicial toric Fano threefold.

The suggestion that a very general hypersurface in a toric Fano threefold $\var$ has the same Picard number as the ambient variety can be found, in a different language, in an unpublished paper by Rohsiepe \cite{Rohsiepe} (see the formula and Remark in the middle of page 3), based on  some dimension counting arguments and trying to generalize to the   case $\dim \var =3 $ a formula that Batyrev proved for $\dim\var = 4$ \cite{Bat94}.

\noindent{\bf Acknowledgements.} We thank Eduardo Cattani, Alberto Collino,   David Cox,
Igor Dolgachev, Luca Migliorini,  Vittorio Perduca, Domingo Toledo  and the referee for useful discussions and suggestions. We are grateful for the hospitality and support offered by the University of Pennsylvania and SISSA.
The first author would also like to thank the staff and the scientists at Penn's Department of Mathematics  for providing an enjoyable and productive atmosphere.

\bigskip

\section{Hypersurfaces in simplicial complete toric varieties}\label{background}
In this section we recall some basic facts about hypersurfaces in toric varieties and their cohomology. We mainly follow the notation in \cite{BaCox94}. All schemes are schemes over the complex numbers.
\subsection{Preliminaries and notation} Let $M$ be a free abelian group of rank $d$, let $N=\Hom(M,\Z)$, and  $N_\R=N\otimes_\Z\R$.
\begin{defin} \cite[Def.~1.1 and 1.3]{BaCox94}  \begin{enumerate} \item A convex subset $\sigma\subset N_\R$ is a rational $k$-dimensional simplicial cone if  there exist $k$ linearly independent primitive elements $e_1,\dots,e_k\in N$ such that
$\sigma = \{\mu_1e_1+\dots+\mu_ke_k\}$,  with $\mu_i$ nonnegative real numbers.
The generators $e_i$ are said to be {\em integral} if for every $i$ and any nonnegative rational number $\mu$, the product $\mu\,e_i$ is in $N$ only if $\mu$ is an integer.

\item Given two rational simplicial cones $\sigma$, $\sigma'$, one says that $\sigma'$ is a face
of $\sigma$ (we then write $\sigma' < \sigma$) if the  set of integral generators of $\sigma'$ is a subset of the set of integral generators of $\sigma$.
\item A finite set $\Sigma=\{\sigma_1,\dots,\sigma_s\}$  of rational simplicial cones is called a
rational simplicial complete $d$-dimensional fan if
\begin{enumerate} \item all faces of cones in $\Sigma$ are in $\Sigma$;
\item if $\sigma,\sigma'\in\Sigma$, then $\sigma\cap\sigma'<\sigma$ and $\sigma\cap\sigma'<\sigma'$;
\item $N_\R= \sigma_1\cup\dots\cup\sigma_s$.
\end{enumerate}\end{enumerate}
\end{defin}

A rational simplicial complete  $d$-dimensional fan  $\Sigma$
 defines a toric variety $\var $ of dimension $d$  having only  Abelian quotient singularities. Moreover, $
\var$ is simply connected, and is  an orbifold.
We shall use the term ``orbifold'' in the following sense (see, e.g., \cite{CoKa99}, Def.~A.2.1): an $n$-dimensional variety $Y$ is an orbifold if every point $y\in Y$ has a neighborhood which is isomorphic to $U/G$ as an analytic space, where $G$ is a subgroup of $Gl_n(\C)$ with no nontrivial complex reflections, and $U$ is  a $G$-invariant neighborhood of the origin of $\C^n$. (A complex reflection is an element in $Gl_n(\C)$ with $n-1$ eigenvalues equal to 1.) A sub-orbifold of an orbifold $Y$ is a subvariety $Y'\subset Y$ with the property that for every $y\in Y'$ there is a local chart $(U/G,0)$ of $Y$ at $y$ such that the inverse image of $Y'$ in $U$ is smooth at 0. Intuitively, a sub-orbifold is a subvariety whose only singularities come from the ambient variety. These notions of orbifold and sub-orbifold are synonymous to those of $V$-manifold and sub-$V$-manifold, which is indeed the terminology used in \cite{BaCox94}. The notion of $V$-manifold is originally due to
 Satake  \cite{Satake}.

\begin{definp} \label{picards}Let  $Cl(\Sigma)$ be the group of Weil divisors 
in $\var$ modulo rational equivalence,
and let $\operatorname{Pic}(\Sigma)$ be the group of line bundles on $\var$ modulo
isomorphism. As the notation suggests, both are intrinsic to the fan $\Sigma$. Both are
finitely generated Abelian groups, and $\operatorname{Pic}(\Sigma)$ is actually free.
Moveover, under our assumptions the toric variety $\var$ is $\Q$-factorial, i.e., the
natural inclusion
$\operatorname{Pic}(\Sigma) \hookrightarrow Cl(\Sigma)$ becomes an isomorphism if one
tensors by $\Q$.
 The rank of the two groups, denoted by $\rho(\Sigma)$,  is also the {Picard number}, the
rank of the N\'eron-Severi group of $\var$.

Recall that the  N\'{e}ron-Severi group of a variety $Y$
 is  the image of the Picard group in the second cohomology group with integer coefficients. 
One can define its rank    as $\rho(Y) \stackrel{def}{=}
\dim_{\Q} NS(Y) \otimes  _{\Z} \Q = \dim_{\Q} H^{2}(Y,\Q) \cap  H^{1,1}(Y,\C)$.
\end{definp}

 The group $\mathbf D(\Sigma)=\operatorname{Spec}\C[Cl(\Sigma)]$ is an affine algebraic group whose
character group is isomorphic to $Cl(\Sigma)$. Since there is a surjection
$\mathbb Z^n \twoheadrightarrow Cl(\Sigma)$, we have an embedding $\mathbf D(\Sigma)\hookrightarrow (\C^\ast)^n$,
and a natural action of $\mathbf D(\Sigma)$ on the affine space $\AAA^n$.
The quotient $\mathbf T(\Sigma)=(\C^\ast)^n/\mathbf D(\Sigma)$ is an algebraic torus.
Below we shall show that this group   acts naturally on $\var$.

\begin{defin} (\cite{Cox95}) Given a fan $\Sigma$, consider a variable $z_i$ for each 1-dimensional cone $\varsigma_i$  in $\Sigma$, and let $S(\Sigma)$ be the polynomial ring $\C[z_1,\dots,z_n]$. For every $\sigma\in\Sigma$, let $z_\sigma = \prod_{\varsigma_i\not\subset \sigma} z_i$, and
let $B(\Sigma)$ the ideal in $S(\Sigma)$ generated by the $z_\sigma$'s.

$S(\Sigma)$ is called the \emph{Cox ring}.
\end{defin}

$S(\Sigma)$ is a graded ring, with grading provided by the class group, $S(\Sigma) = \oplus_{\beta\in Cl(\Sigma) }S_\beta$.
We identify the affine space $\AAA^n$ with $\operatorname{Spec}S(\Sigma)$,
and denote by $Z(\Sigma)$ the affine variety in $\AAA^n$ given by the ideal $B(\Sigma)$. If we set $U(\Sigma)=\AAA^n-Z(\Sigma)$, the group $\mathbf  D(\Sigma)$ acts on $U(\Sigma)$, and the toric variety $\var$ may be represented as $U(\Sigma)/\mathbf  D(\Sigma)$.
This yields an action of $\mathbf  T(\Sigma)$ on $\var$.
For every face $\tau$ in $\Sigma$ we shall denote by
$\mathbf T_\tau\subset \var$ the orbit of $\tau$ in $\var$ under this action.

\subsection{Quasi-smooth hypersurfaces} From now on we assume that $\var$ is projective.
Let $L$ be an ample line bundle on $\var $, and denote by $\beta\in Cl(\Sigma)$
its degree; a section of $L$ is a polynomial in $S_\beta$.

\begin{defin} \cite[Def.~3.1]{BaCox94} Let $f$ be a section of $L$, and
let $\mathbf V(f)$ be the zero locus of $f$ in $\operatorname{Spec}S(\Sigma)$. We say that
the hypersurface $X$ cut  in  $\var $ by the equation $f=0$ is {\em quasi-smooth} if
$\mathbf V(f)$ is smooth outside $Z(\Sigma)$.
\end{defin}

\begin{defin}\label{nondegenerate}\cite[Def.~4.13]{BaCox94}  If $L$ is an ample line bundle
on $\var$, a hypersurface $X$ is said to be {\em nondegenerate} if $X\cap\mathbf T_\tau$ is a smooth 1-codimensional subvariety of
$\mathbf T_\tau$ for all $\tau$ in $\Sigma$.
\end{defin}

\begin{prop}\cite[Prop.~3.5, 4.15]{BaCox94}  If $f$ is the general section of an ample invertible sheaf, then $X$ is nondegenerate.
Moreover, every nondegenerate hypersurface $X\subset\var$ is quasi-smooth.
Thus, if   $f$ is a general section of $L$, its zero locus is a quasi-smooth hypersurface $X$ in $\var $, hence it is an orbifold.
\end{prop}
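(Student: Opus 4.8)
The plan is to establish the three assertions in turn: that a general section is nondegenerate, that nondegeneracy implies quasi-smoothness, and that a quasi-smooth $X$ is an orbifold. The first and third are comparatively soft, and I expect the genuine content to lie in the middle implication.

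For the first assertion I would argue orbit by orbit. The fan $\Sigma$ has only finitely many cones, so $\var$ is the disjoint union of the finitely many torus orbits $\mathbf T_\tau$, each a smooth locally closed subvariety on which $\mathbf T(\Sigma)$ acts transitively. As $L$ is ample on the complete toric variety $\var$ it is globally generated, so the linear system $|L|=\PP(S_\beta)$ is base-point free; restricting to a fixed $\mathbf T_\tau$ and invoking the Kleiman--Bertini transversality theorem (we are in characteristic zero and the acting group is transitive on $\mathbf T_\tau$), I conclude that the general member of $|L|$ meets $\mathbf T_\tau$ in a smooth subvariety of codimension one (empty when the restricted section is a nonzero constant). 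Because there are only finitely many orbits, each condition ``$X\cap\mathbf T_\tau$ is smooth'' is dense and open in $\PP(S_\beta)$, so their common intersection is nonempty, and a general $f$ yields a nondegenerate $X$.

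For the central assertion I would pass to the Cox presentation $\var=U(\Sigma)/\mathbf D(\Sigma)$ with quotient map $\pi$ and apply the Jacobian criterion on the smooth affine open $U(\Sigma)\subset\AAA^n=\operatorname{Spec}S(\Sigma)$. Quasi-smoothness is precisely the statement that $\mathbf V(f)\cap U(\Sigma)=\pi^{-1}(X)$ is smooth, i.e.\ that $\nabla f(p)\neq 0$ for every $p\in U(\Sigma)$ with $f(p)=0$. Fixing such a $p$, it lies over a point $\bar p\in\mathbf T_\tau$ for a unique cone $\tau$; after recording which Cox coordinates vanish at $p$ (exactly those $z_i$ indexed by the rays $\varsigma_i\subset\tau$) I would split the variables accordingly. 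Since $f$ is homogeneous of degree $\beta$, the differential $df_p$ annihilates the directions tangent to the $\mathbf D(\Sigma)$-orbit through $p$ (the Euler-type relations force this at points where $f=0$), so $df_p$ descends to $\mathbf T_\tau$, and nondegeneracy of $X$ says exactly that this descended differential is nonzero at $\bar p$. Hence the restriction of $df_p$ to the stratum $\{z_i=0:\varsigma_i\subset\tau\}$ is already nonzero, so some partial derivative of $f$ in the non-vanishing coordinates fails to vanish at $p$, whence $\nabla f(p)\neq 0$. This is the delicate step: one must set up the homogeneous-coordinate description of the orbit stratification carefully and verify that the intrinsic smoothness of the slice $X\cap\mathbf T_\tau$ really does translate into non-vanishing of the ambient gradient, while keeping track of the group directions along which $f$ is only semi-invariant.

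For the final assertion I would use that quasi-smoothness makes $\pi^{-1}(X)=\mathbf V(f)\cap U(\Sigma)$ a smooth, $\mathbf D(\Sigma)$-invariant hypersurface, so $X$ is a quotient of a smooth variety by $\mathbf D(\Sigma)$ acting with finite stabilizers. Concretely, at any point of $X$ take a local orbifold chart of $\var$, realized as a slice $W\subset U(\Sigma)$ transverse to the $\mathbf D(\Sigma)$-orbit; since $\pi^{-1}(X)$ is smooth and contains the orbit directions in its tangent space, its intersection with $W$ is transverse, hence smooth of the expected dimension $\dim\var-1$. Thus the inverse image of $X$ in each chart is smooth, which is exactly the condition that $X$ be a sub-orbifold of $\var$; since a sub-orbifold of an orbifold is again an orbifold, $X$ is an orbifold. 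Assembling the three steps gives the statement.
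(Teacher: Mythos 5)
The paper offers no proof of this Proposition at all: it is quoted verbatim from Batyrev--Cox, the ``proof'' being the citation \cite[Props.~3.5, 4.15]{BaCox94}. So there is no internal argument to compare against; what you have produced is a reconstruction of the proofs in the cited reference, and it follows essentially the same lines as theirs. Your orbit-by-orbit genericity argument (finitely many orbits, each a smooth torus; restricted linear system base-point free because the ample $L$ on the complete toric variety is globally generated; intersect finitely many dense opens) is how nondegeneracy of the general member is obtained there. Your middle step --- read off the cone $\tau$ from which Cox coordinates vanish at $p\in U(\Sigma)$, use the Euler-type relations $\sum_i a_i z_i\,\partial f/\partial z_i = c\,f$ to see that $df_p$ kills the $\mathbf D(\Sigma)$-orbit directions at points of $\mathbf V(f)$, and convert smoothness of $X\cap\mathbf T_\tau$ at $\bar p$ into $\partial f/\partial z_j(p)\neq 0$ for some $j$ with $z_j(p)\neq 0$ --- is precisely the Batyrev--Cox mechanism for ``nondegenerate $\Rightarrow$ quasi-smooth'', and the transverse-slice argument for the sub-orbifold (hence orbifold) claim is theirs as well. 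Note that the finiteness of the $\mathbf D(\Sigma)$-stabilizers on $U(\Sigma)$, which you assert in passing, is exactly where simpliciality of $\Sigma$ enters; it would be worth flagging.

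Two points deserve tightening, though neither is a structural flaw. First, in the middle step, the passage from ``$X\cap\mathbf T_\tau$ is a smooth codimension-one subvariety'' to ``the descended differential is nonzero at $\bar p$'' uses the Jacobian criterion, which requires the scheme cut out by $f$ on the stratum to be reduced: with a purely set-theoretic reading of Definition~\ref{nondegenerate}, a section $f=g^2$ with $\{g=0\}$ smooth would count as nondegenerate while being nowhere quasi-smooth. You should either read nondegeneracy scheme-theoretically or run the argument in the opposite direction: the projection from the locally closed stratum $U_\tau\subset U(\Sigma)$ onto $\mathbf T_\tau$ is a smooth surjection, so scheme-smoothness of $X\cap\mathbf T_\tau$ pulls back to scheme-smoothness of $\{f|_{U_\tau}=0\}$ at $p$, and the Jacobian criterion on the torus $U_\tau$ then yields the nonvanishing partial derivative directly. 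Second, the appeal to Kleiman--Bertini is slightly off target: Kleiman's transversality theorem concerns a general \emph{translate} of a fixed divisor, i.e.\ a general member of a single $\mathbf T(\Sigma)$-orbit inside $|L|$, which is a priori weaker than what you need for a general member of $|L|$. What your argument actually uses, and what is available, is the ordinary characteristic-zero Bertini theorem for the base-point-free restricted system on the smooth variety $\mathbf T_\tau$ (equivalently, generic smoothness applied to pullbacks of hyperplanes under the morphism defined by $|L|$). With these two repairs your proof is complete and matches the source the paper relies on.
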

An important fact is that the complex cohomology of an orbifold has a pure
Hodge structure in each dimension \cite{Stee77,Tu86}.

We also note that in view of the homotopy hyperplane Lefschetz theorem, which holds for the embedding $ X \hookrightarrow \var $
\cite[Thm.~1.2 Part II]{GoMac88}, $X$ is simply connected if $\dim (\var) \geq 3$.

\subsection{Primitive cohomology of a hypersurface} Let $L$ be an ample line bundle on $\var$, and let $X$ be a hypersurface in $\var$ cut by a section $f$ of $L$
(note that by \cite{BaCox94},
Proposition 10.8, $f$ lies in $B(\Sigma)$). Denote by $i\colon X \to \var $ the inclusion, and by $i^\ast\colon H^\bullet( \var ,\C)
\to H^\bullet(X,\C)$ the associated morphism in cohomology; $i^\ast\colon H^{d-1}( \var ,\C)
\to H^{d-1}(X,\C)$ is injective by Proposition 10.8 in \cite{BaCox94}.

\begin{defin}\cite[Def.~10.9]{BaCox94} The primitive cohomology group $PH^{d-1}(X)$ is the quotient
$H^{d-1}(X,\C)/$ $i^\ast(H^{d-1} (\var ,\C))$.
\end{defin}

\begin{lemma} \label{ortho} The exact sequence
$$ 0 \to i^\ast(H^{d-1} (\var ,\C)) \to H^{d-1}(X,\C) \to PH^{d-1}(X) \to 0 $$
splits orthogonally with respect to the intersection pairing in $H^\bullet(X,\C)$. The same is true
with coefficients in $\Q$.
\end{lemma}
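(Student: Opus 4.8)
The plan is to reduce the statement to a single non-degeneracy assertion and then feed it into Hard Lefschetz for the ambient orbifold. Concretely, write $W:=i^\ast(H^{d-1}(\var,\C))\subseteq H^{d-1}(X,\C)$. The intersection pairing on $H^{d-1}(X,\C)$ is a non-degenerate bilinear form (symmetric or alternating according to the parity of $d-1$), because $X$ is a projective orbifold of dimension $d-1$, hence a rational homology manifold satisfying Poincaré duality with $\C$- and $\Q$-coefficients. For any reflexive non-degenerate bilinear form on a finite-dimensional space, the restriction to a subspace $W$ is non-degenerate if and only if $H^{d-1}(X,\C)=W\oplus W^\perp$; in that case the composite $W^\perp\hookrightarrow H^{d-1}(X,\C)\twoheadrightarrow PH^{d-1}(X)$ is an isomorphism, which is exactly the asserted orthogonal splitting. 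So the whole lemma comes down to showing that the intersection form restricts non-degenerately to $W$.

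To analyze the form on $W$ I would pass to the ambient variety via the Gysin map. Since $X$ is a quasi-smooth (sub-orbifold) hypersurface, both $X$ and $\var$ satisfy Poincaré duality rationally, so there is a pushforward $i_\ast\colon H^\bullet(X)\to H^{\bullet+2}(\var)$ satisfying the projection formula $i_\ast i^\ast\gamma=\gamma\cup[X]$, where $[X]\in H^2(\var,\C)$ is the fundamental class of $X$. Because $X$ is cut out by a section of the ample line bundle $L$, one has $[X]=c_1(L)$. Combining this with $\int_X i^\ast\gamma=\int_\var\gamma\cup[X]$ gives, for $a,b\in H^{d-1}(\var,\C)$,
\[
\int_X i^\ast a\cup i^\ast b=\int_X i^\ast(a\cup b)=\int_\var a\cup b\cup c_1(L).
\]
Thus the intersection form on $W$ is governed by the bilinear form $(a,b)\mapsto\int_\var a\cup b\cup c_1(L)$ on $H^{d-1}(\var,\C)$.

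The heart of the argument is to see that this last form is non-degenerate. It is the composition of the Lefschetz operator $a\mapsto a\cup c_1(L)$, which maps $H^{d-1}(\var)\to H^{d+1}(\var)$, with the Poincaré duality pairing between $H^{d-1}(\var)$ and $H^{d+1}(\var)$. By the Hard Lefschetz theorem applied to the $d$-dimensional projective orbifold $\var$ (the case of cupping once, $H^{d-1}\to H^{d+1}$), this operator is an isomorphism, so the composite form is non-degenerate on $H^{d-1}(\var,\C)$. Since $i^\ast$ is injective on $H^{d-1}(\var,\C)$ by Proposition 10.8 of \cite{BaCox94}, the induced form on $W=i^\ast(H^{d-1}(\var,\C))$ is non-degenerate as well, which is what we needed. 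The entire chain of reasoning is defined over $\Q$: $c_1(L)$ is a rational (indeed integral) class, and both Poincaré duality and Hard Lefschetz hold rationally for orbifolds, so the splitting holds with $\Q$-coefficients too.

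I expect the main obstacle to be not the linear algebra but the justification that the Kähler package—Poincaré duality, the Gysin/projection formula, and above all Hard Lefschetz—is available in the orbifold setting. This is exactly where one must use that $\var$ and $X$ are projective orbifolds ($V$-manifolds) carrying pure Hodge structures of Kähler type, as already noted in the excerpt; for such spaces the classical Lefschetz machinery applies verbatim because they are rational homology manifolds. Once this input is granted, the remaining steps are formal.
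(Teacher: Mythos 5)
Your proof is correct and is essentially the paper's own argument: both rest on hard Lefschetz for the projective orbifold $\var$ (so that cupping with $c_1(L)$ is an isomorphism $H^{d-1}(\var,\C)\to H^{d+1}(\var,\C)$), on the projection-formula identity $\langle i^\ast a,i^\ast b\rangle=\langle a\cup c_1(L),b\rangle$, and on the injectivity of $i^\ast$ from Proposition 10.8 of \cite{BaCox94}. The only difference is packaging: by adjointness of $i^\ast$ and $i_\ast$ together with Poincar\'e duality on $\var$, your orthogonal complement $W^\perp$ coincides with $\ker i_\ast$, which is precisely the subspace the paper uses to construct its splitting $s$ in the commutative diagram.
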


\begin{proof} The hard Lefschetz theorem holds also for projective orbifolds  (this follows from the results  in \cite{SaitoKyoto}; a simple proof is given in
\cite{Zaf09}).

Then cupping by $c_1(L)$ we get   an isomorphism
$\ell\colon H^{d-1}(\var,\C) \to H^{d+1}(\var,\C)$. Let $i_\ast \colon H^{d-1}(X,\C)\to
H^{d+1}(\var,\C)$ be the Gysin map. We claim that the following commutative diagram
$$\xymatrix{
&& 0\ar[d] & 0 \ar[d]\\
&0 \ar[d] \ar[r] & H^{d-1}(\var,\C) \ar[d]_{i^\ast} \ar[r] & H^{d-1}(\var,\C) \ar[d]_\ell \ar[r] & 0 \\
0 \ar[r] & \ker i_\ast \ar[d] \ar[r] & H^{d-1}(X,\C) \ar[d] \ar[r]_{i_\ast} &  H^{d+1}(\var,\C) \ar[d]  \ar[r] & 0  \\
0 \ar[r] &  \ker i_\ast  \ar[d] \ar[r]  & PH^{d-1}(X) \ar@<-2mm>@/_10pt/[u]_s \ar[r]\ar[d] & 0 \\
& 0 & 0}$$
 provides a straightforward splitting $s$ of the above exact sequence.  Let $\langle\,,\rangle$ be
the intersection pairing in   cohomology both in $H^\bullet(X,\C)$ and $H^\bullet(\var,\C)$, and recall that  $i^\ast$ and $i_\ast$ are adjoint with respect to the intersection pairing.
The upper-right square commutes since by Poincar\'e duality
$$\langle i_\ast i^\ast\alpha,\beta\rangle = \langle i^\ast\alpha,i^\ast\beta\rangle
=\langle c_1(L) \cup\alpha,\beta\rangle=\langle \ell(\alpha),\beta\rangle\,.$$
If $\alpha\in H^{d-1}(\var,\C)$ and $\beta \in PH^{d-1}(X)$,
we have
$$ \langle i^\ast(\alpha),s(\beta)\rangle=\langle \alpha, i_\ast(s (\beta ))\rangle =0\,.$$
If the statement is true with coefficients in $\C$ it also true with coefficients in $\Q$ since
$H^\bullet(X,\C)\simeq H^\bullet(X,\Q)\otimes_\Q\C$.
\end{proof}

\begin{remark} The kernel of $i_\ast$ in  $H^{d-1}(X,\C)$ is sometimes called the ``variable cohomology'' $H^{d-1}_{\rm var}(X,\C)$;  in degree $d-1$ the variable and primitive cohomologies of $X$
are then isomorphic.
\end{remark}

Both $H^{d-1}( \var ,\C)$ and $ H^{d-1}(X,\C)$ have pure Hodge structures,
and the morphism $i^\ast$ is compatible with them, so that $PH^{d-1}(X)$ inherits
a pure Hodge structure. We shall write
$$ PH^{d-1}(X) = \bigoplus_{p=0}^{d-1} PH^{p,d-1-p}(X).$$

 The following Proposition \ref{iso} implicitly uses a generalization of
Bott's vanishing theorem, called the Bott-Steenbrink-Danilov theorem, which indeed holds under our assumptions. The exact statement is that $H^i(\var,\Omega^p_\var(L))$ $=0$ for all
$i>0$ and $p\ge 0$ if $L$ is an ample line bundle on $\var$.
This was stated without proof by Danilov \cite{Dan78} and proved in \cite{BaCox94} (Theorem 7.1).

\begin{prop} \label{iso} There is a natural isomorphism
$$ PH^{p,d-p-1}(X) \simeq \frac{H^0(\var,\Omega^d_\var((d-p+1)X)}{
H^0(\var,\Omega_\var^d((d-p)X)+dH^0(\var,\Omega_\var^{d-1}((d-p)X)}$$
\end{prop}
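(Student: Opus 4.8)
The plan is to adapt the Griffiths--Dwork residue method to the toric orbifold $\var$. Write $U=\var\setminus X$ for the complement and $j\colon U\hookrightarrow\var$, $i\colon X\hookrightarrow\var$ for the two inclusions. Since $X$ is quasi-smooth and ample it is a sub-orbifold of real codimension two, so the Gysin sequence
$$ H^{d}(\var,\C)\xrightarrow{\,j^{\ast}\,}H^{d}(U,\C)\xrightarrow{\,\mathrm{Res}\,}H^{d-1}(X,\C)\xrightarrow{\,i_{\ast}\,}H^{d+1}(\var,\C) $$
is available. Its image under $\mathrm{Res}$ is $\ker i_{\ast}=H^{d-1}_{\mathrm{var}}(X,\C)$, which by the Remark following Lemma~\ref{ortho} is canonically isomorphic to $PH^{d-1}(X)$. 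Hence $\mathrm{Res}$ induces an isomorphism $H^{d}(U,\C)/j^{\ast}H^{d}(\var,\C)\simeq PH^{d-1}(X)$, and since the residue map is a morphism of Hodge structures up to a shift of the filtration, it suffices to compute $H^{d}(U,\C)$ together with its Hodge, i.e.\ pole-order, filtration.

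Second, I would compute $H^{\bullet}(U,\C)$ algebraically as the hypercohomology $\mathbb H^{\bullet}(\var,\Omega^{\bullet}_{\var}(\ast X))$ of the complex of Zariski differential forms on $\var$ with arbitrary poles along $X$; this is legitimate on the orbifold $\var$ precisely because the pure Hodge structures of $X$ and of $\var$ are already in place \cite{Stee77}. Filtering this complex by order of pole yields a spectral sequence whose $E_{1}$-page is assembled from the groups $H^{i}(\var,\Omega^{p}_{\var}(kX))$. Here the Bott--Steenbrink--Danilov vanishing theorem recalled just before the statement is the crucial input: because $kX$ is ample for every $k\ge 1$, all higher cohomology $H^{i}(\var,\Omega^{p}_{\var}(kX))$ with $i>0$ vanishes. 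Consequently the spectral sequence degenerates, the hypercohomology is computed by global sections alone, and every class of $H^{d}(U,\C)$ is represented by a single global meromorphic $d$-form, unique modulo exact forms of the appropriate pole order.

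Third, I would read off the graded pieces of the pole-order filtration. After the residue shift, the classes mapping onto $PH^{p,d-p-1}(X)$ are exactly those represented by a global section of $\Omega^{d}_{\var}((d-p+1)X)$, that is, a $d$-form with a pole of order at most $d-p+1$ along $X$, the precise pole order being dictated by the residue shift and the normalisation of the toric volume form. Two such forms determine the same class precisely when their difference lies in the sum of the forms of strictly smaller pole order, namely $H^{0}(\var,\Omega^{d}_{\var}((d-p)X))$, together with the exact forms $d\eta$ for $\eta\in H^{0}(\var,\Omega^{d-1}_{\var}((d-p)X))$; these are the only exact $d$-forms that survive in the allowed pole order, again by the vanishing theorem applied to the truncated de Rham complex. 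This produces exactly the quotient in the statement, and the compatibility of $\mathrm{Res}$ with the Hodge structures identifies it with the $(p,d-p-1)$ component of $PH^{d-1}(X)$.

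The main obstacle is to prove that the pole-order filtration coincides with the Hodge filtration, equivalently that the pole-order spectral sequence degenerates at $E_{1}$ with precisely the expected graded quotients. In the classical smooth case this rests on Bott vanishing on $\PP^{n}$ together with the strictness of the Hodge filtration; on the singular variety $\var$ one must instead combine the Bott--Steenbrink--Danilov vanishing theorem with the purity of the Hodge structure of the orbifold $X$ furnished by the results of Steenbrink and Saito \cite{SaitoKyoto} cited above. Checking that the reflexive sheaves $\Omega^{p}_{\var}$ are well enough behaved for these arguments---in particular that $\mathrm{Res}$ respects the two filtrations and that no spurious exact forms or higher-cohomology contributions survive---is the technical heart of the matter, and is exactly what is carried out in \cite{BaCox94}.
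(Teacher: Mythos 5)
Your proposal is correct and follows essentially the same route as the paper, whose entire proof consists of comparing Corollaries 10.2 and 10.12 of \cite{BaCox94}: those corollaries are established there precisely by the machinery you outline, namely the Gysin/residue identification of $PH^{d-1}(X)$ with a quotient of $H^{d}(\var\setminus X)$, the pole-order spectral sequence degenerating via Bott--Steenbrink--Danilov vanishing, and the comparison of the pole-order and Hodge filtrations. The one step you explicitly defer---that the pole-order filtration computes the Hodge filtration on the orbifold $\var$---is exactly the technical content of \cite{BaCox94}, so your deferral coincides with the paper's own citation and introduces no gap.
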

\begin{proof} This follows from comparing Corollaries 10.2 and 10.12 in \cite{BaCox94}.
\end{proof}
The resulting projection map, multiplied by the factor $(-1)^{p-1}/(d-p+1)!$, will be denoted
by
\begin{equation}\label{residue} r_p \colon H^0(\var,\Omega^d_\var((d-p+1)X) \to PH^{p,d-p-1}(X)
\end{equation}
and is  called the {\em $p$-th residue map} in analogy with the classical case.

\begin{defin} Let $X$ be any hypersurface in $\var$ cut by a section $f$ of $L$ and  let $J(f)$ be the ideal of the Cox ring generated by the derivatives of $f$.
The ring $R(f)=S(\Sigma)/J(f)$ is the
 {\em Jacobian ring}  of $S(\Sigma)$.
\end{defin}

  The Jacobian ring encodes all the information about the primitive cohomology of $X$:

\begin{prop} If $p\ne d/2-1$, $ PH^{p,d-p-1}(X) \simeq R(f)_{(d-p)\beta-\beta_0 }$,  where
$\beta_0 = - \deg K_{\var }$, $\beta=\deg L$.
\label{isoring}\end{prop}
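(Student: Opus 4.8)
The plan is to combine Proposition~\ref{iso} with the Batyrev--Cox dictionary between the cohomology of twisted differential forms on $\var$ and the graded pieces of the Cox ring $S(\Sigma)$. Proposition~\ref{iso} presents $PH^{p,d-p-1}(X)$ as the cokernel of the inclusion of forms of lower pole order together with the exterior derivative, sitting inside a space $H^0(\var,\Omega^d_\var(kX))$ of meromorphic $d$-forms with bounded pole along $X$. My first step would be to identify this top space with a single graded component of $S(\Sigma)$. Since $\var$ is $\Q$-factorial one has $\Omega^d_\var\cong\cO_\var(K_\var)$ with $[K_\var]=-\beta_0$, so Cox's description of sheaf cohomology on $\var$, combined with the Bott--Steenbrink--Danilov vanishing recalled before Proposition~\ref{iso} (which guarantees there are no higher-cohomology corrections), realises each term as a graded piece of $S(\Sigma)$. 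Concretely, a polynomial $A$ of the appropriate degree is sent to the rational form $A\,\Omega_0/f^{\,k}$, where $\Omega_0$ is the toric residue $d$-form (the analogue of $\sum_i(-1)^i z_i\,dz_1\wedge\cdots\wedge\widehat{dz_i}\wedge\cdots\wedge dz_n$, of degree $\beta_0$); this is the residue map $r_p$ of \eqref{residue}, and in the Hodge degree $(p,d-p-1)$ the precise bookkeeping of \cite{BaCox94} places the numerator in degree $(d-p)\beta-\beta_0$.

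The core of the argument is to show that, under this identification, the image of the map in the denominator is precisely $J(f)_{(d-p)\beta-\beta_0}$. The lower-pole summand embeds as multiplication by $f$, and the multigraded Euler relations $\sum_i\langle\phi,\beta_i\rangle\,z_i\,\partial f/\partial z_i=\langle\phi,\beta\rangle f$ for $\phi\in\Hom(Cl(\Sigma),\C)$ show that $f\in J(f)$ for ample $\beta$, so this summand contributes nothing beyond $J(f)$. For the exterior derivative, applying $d$ to a $(d-1)$-form of the shape $\sum_i(-1)^i B_i\,\iota_{\partial_{z_i}}\Omega_0\big/f^{\,k-1}$ produces, modulo terms of strictly lower pole order, a $d$-form whose numerator is $\sum_i B_i\,\partial f/\partial z_i$. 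This is the toric counterpart of Griffiths' reduction-of-pole-order formula; feeding the lower-order remainders back into the same reduction inductively shows that the exact forms span exactly $J(f)_{(d-p)\beta-\beta_0}$. The cokernel is then $R(f)_{(d-p)\beta-\beta_0}$, as claimed.

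I expect the main obstacle to be this pole-order reduction in the multigraded setting: because the derivatives $\partial f/\partial z_i$ now carry \emph{different} Cox degrees $\beta-\beta_i$, one must check that they still generate the correct graded pieces of $J(f)$ and that the inductive elimination of lower-order poles terminates. The hypothesis $p\neq d/2-1$ enters at the very end: the residue quotient of Proposition~\ref{iso} coincides with the naive Jacobian ring $R(f)=S(\Sigma)/J(f)$ in every Hodge degree except the central one, where the comparison acquires an extra, combinatorial contribution of ambient-toric origin; excluding this single value of $p$ is exactly what is needed for the clean statement, and pinning down that correction is what forces one to appeal to the detailed Jacobian-ring analysis of \cite{BaCox94} rather than to the elementary reduction alone. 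In practice, then, I would either invoke the corresponding corollary of \cite{BaCox94} and match it term by term against Proposition~\ref{iso}, or reproduce the reduction lemma with the explicit form $\Omega_0$; the remainder is degree bookkeeping.
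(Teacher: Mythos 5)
Your proposal is correct and takes essentially the same route as the paper, whose entire proof is the citation of Theorem 10.13 of \cite{BaCox94}: your sketch simply unwinds that theorem's argument (the identification $H^0(\var,\Omega^d_\var(kX))\simeq S_{k\beta-\beta_0}$ via the Euler form $\Omega_0$, the multigraded Euler relations giving $f\in J(f)$ for ample $\beta$, and the Griffiths-type pole reduction), and, like the paper, you ultimately defer the middle-degree correction hiding behind the hypothesis $p\neq d/2-1$ to the detailed Jacobian-ring analysis of \cite{BaCox94}. One further point in your favor: your degree bookkeeping --- pole order $d-p$ with numerator in $S_{(d-p)\beta-\beta_0}$ for the piece $PH^{p,d-1-p}(X)$ --- is the internally consistent one (it matches diagram \eqref{commuta} and the statement of Proposition \ref{isoring}), whereas the pole order $d-p+1$ printed in Proposition \ref{iso} carries an off-by-one indexing shift, so you were right not to take that formula literally.
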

\begin{proof} \cite{BaCox94} Theorem 10.13. \end{proof}

\bigskip

\section{The Picard group of the general toric threefold}\label{Pic}
\subsection{The Gauss-Manin connection}
Let ${\mathcal Z}$ be the open subscheme of $\vert L \vert$ pa\-ram\-e\-triz\-ing the quasi-smooth hypersurfaces in  $\vert L \vert$, and let $\pi\colon \mathscr F \to {\mathcal Z}$ be the tautological family on ${\mathcal Z}$; we denote by $X_z$  the fiber of $\mathscr F$  at  $z\in \mathcal Z$.
Let $\mathscr H^{d-1}$ be the local system on ${\mathcal Z}$ whose fiber at $z$
is the   cohomology $H^{d-1}(X_z)$, i.e., $\mathscr H^{d-1}=R^{d-1}\pi_\ast\C$.
It defines a flat connection $\nabla$ in the
vector bundle $\mathscr E^{d-1} = \mathscr H^{d-1} \otimes_\C \cO_{\mathcal Z}$, the
{\em Gauss-Manin connection} of $\mathscr E^{d-1}$. Since the hypersurfaces $X_z$ are quasi-smooth, the Hodge structure of the fibres $H^{d-1}(X_z)$ of $\mathscr E^{d-1}$ varies analytically with $z$ \cite{Stee77}. The corresponding filtration defines holomorphic subbundles $F^p\mathscr E^{d-1}$, and the graded object of the filtration defines holomophic bundles $Gr_F^p(\mathscr E^{d-1})$. The bundles
$\mathscr E^{p,d-p-1}$ given by the Hodge decomposition are not holomorphic subbundles of $\mathscr E^{d-1}$, but are diffeomorphic to $Gr_F^p(\mathscr E^{d-1})$, and as such they have a holomorphic structure. The quotient bundles
$\mathscr {PE}^{p,d-p-1}$ of $\mathscr E^{p,d-p-1}$ correspond to the primitive cohomologies of the hypersurfaces $X_z$.
Let $\pi_p: \mathscr E^{d-1}\to \mathscr{PE}^{p,d-p+1}$ be the natural projection.

 We denote by $\tilde\gamma_p$  the cup product
$$ \tilde\gamma_p\colon H^0(\var, \cO_\var(X))   \otimes H^0(\var,\Omega_\var^d((d-p)X))
\to H^0(\var,\Omega_\var^d((d-p+1)X))\,.$$
If $z_0$ is the point in ${\mathcal Z}$ corresponding to $X$, the space $  H^0(\var, \cO_\var(X))/\C(f)$, where $\C(f)$ is the 1-dimensional subspace of $H^0(\var, \cO_\var(X))$ generated by $f$, can be identified with $T_{z_0}{\mathcal Z}$. The morphism $\tilde\gamma_p$ induces in cohomology   the Gauss-Manin connection:

\begin{lemma} Let $\sigma_0$ be a primitive class in $PH^{p,d-p-1}(X)$,
let $v\in T_{z_0}{\mathcal Z}$, and let $\sigma$ be a section of $\mathscr E^{p,d-p-1}$
along a curve in $\mathcal Z$ whose tangent vector at $z_0$ is $v$, such that $\sigma(z_0)=\sigma_0$.

Then   \begin{equation}\label{GM} \pi_{p-1}( \nabla_v(\sigma)) =r_{p-1}( \tilde\gamma_p(\tilde v\otimes \tilde\sigma))
\end{equation}
where  $r_p$, $r_{p-1}$ are the residue morphisms defined in equation \eqref{residue}, $\tilde\sigma$ is an element in $H^0(\var,\Omega_\var^d((d-p+1)X))$
such that $r_p(\tilde\sigma)=\sigma_0$, and $\tilde v$ is a pre-image of $v$ in $H^0(\var, \cO_\var(X))$.

In particular the following diagram commutes:
\begin{equation}\label{commuta}
\xymatrix{
\displaystyle H^0(\var, \cO_\var(X)) 
  \otimes H^0(\var,\Omega_\var^d((d-p)X))
\ar[r]^{\ \ \ \ \ \ \ \ \ \tilde\gamma_p}\ar[d]_{\phi\otimes r_p} &  H^0(\var,\Omega_\var^d((d-p+1)X))
\ar[d]_{r_{p-1}} \\
T_{z_0}{\mathcal Z} \otimes PH^{p,d-1-p}(X) \ar[r]^{\ \ \ \ \ \ \  \gamma_p }& PH^{p-1,d-p}(X) }
\end{equation}
where $\gamma_p$ is the morphism that maps $v\otimes \alpha$ to $\nabla_v\alpha$ and $\phi$ is  the projection
 $\phi\colon$ $  H^0(\var, \cO_\var(X)) \to T_{z_0}{\mathcal Z}$.
\end{lemma}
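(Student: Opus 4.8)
The plan is to carry Griffiths' infinitesimal computation of the derivative of the period map (\cite{CMP03}) over to the toric orbifold setting, using the Batyrev--Cox residue description of Proposition \ref{iso} in place of the classical residue on $\PP^n$. The first observation is that $\pi_{p-1}\circ\nabla_v$ is well behaved on the graded bundle: since $\nabla$ satisfies Griffiths transversality $\nabla F^p\mathscr E^{d-1}\subset F^{p-1}\mathscr E^{d-1}\otimes\Omega^1_{\mathcal Z}$, the composite descends to an $\cO_{\mathcal Z}$-linear bundle map $\mathscr E^{p,d-p-1}\to\mathscr E^{p-1,d-p}$. Hence $\pi_{p-1}(\nabla_v\sigma)$ depends only on $v$ and on $\sigma(z_0)=\sigma_0$, not on the chosen extension $\sigma$; this is exactly what makes the right-hand side of \eqref{GM} well defined, and it lets me replace $\sigma$ by any holomorphic family of primitive classes through $\sigma_0$ that is convenient to differentiate.

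For that family I would take a holomorphic disk $t\mapsto z(t)\in\mathcal Z$ with $z(0)=z_0$, $z'(0)=v$, lifted to sections $f_t\in S_\beta$ with $f_0=f$ and $\dot f_0=\tilde v$. In Cox homogeneous coordinates a lift of $\sigma_0$ is $\tilde\sigma=A\,\Omega_0/f^{\,d-p+1}$, where $\Omega_0$ is the distinguished toric $d$-form of \cite{BaCox94}, $A\in S(\Sigma)$ has the appropriate degree, and $r_p(\tilde\sigma)=\sigma_0$. Holding $A$ and $\Omega_0$ fixed and replacing $f$ by $f_t$ gives a holomorphic family $\tilde\sigma_t=A\,\Omega_0/f_t^{\,d-p+1}$ with $r_p(\tilde\sigma_t)=\sigma(t)$, and a plain differentiation yields
\begin{equation*}
\frac{d}{dt}\Big|_{t=0}\tilde\sigma_t=-(d-p+1)\,\frac{A\,\dot f_0\,\Omega_0}{f^{\,d-p+2}}=-(d-p+1)\,\tilde\gamma_p(\tilde v\otimes\tilde\sigma),
\end{equation*}
a form of pole order $d-p+2$ proportional to the cup product. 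The key input from Griffiths' theory, to be installed in this setting (see below), is that $\pi_{p-1}(\nabla_v\sigma)$ is obtained by applying the residue $r_{p-1}$ to this derivative; the factor $(-1)^{p-1}/(d-p+1)!$ built into the maps \eqref{residue} is arranged precisely so that this reproduces \eqref{GM} with no leftover scalar, and tracking these combinatorial constants is routine. The commutativity of \eqref{commuta} is just the bilinear restatement of \eqref{GM}, with $\phi$ the projection $H^0(\var,\cO_\var(X))\to T_{z_0}\mathcal Z$ and $\gamma_p(v\otimes\alpha)=\nabla_v\alpha$.

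Well-definedness of the right-hand side follows from the same pole-order bookkeeping. The lift $\tilde v$ is ambiguous modulo $\C(f)=\ker\phi$; adding $cf$ changes $\tilde\gamma_p(\tilde v\otimes\tilde\sigma)$ by $c\,A\,\Omega_0/f^{\,d-p+1}$, a form of pole order $d-p+1$ lying in the subspace $H^0(\var,\Omega^d_\var((d-p+1)X))$ that is killed by $r_{p-1}$ in Proposition \ref{iso}. Likewise, changing $\tilde\sigma$ by an element of $H^0(\var,\Omega^d_\var((d-p)X))+d\,H^0(\var,\Omega^{d-1}_\var((d-p)X))$ and then multiplying by $\tilde v$ either lowers the pole order or produces an exact form, and in both cases dies under $r_{p-1}$; this is consistent with the fact that, after passage to the Jacobian ring, $\gamma_p$ becomes multiplication by the class of $\tilde v$ (Proposition \ref{isoring}).

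The hard part is the Griffiths input invoked above: that the transcendentally defined Gauss--Manin connection is computed by differentiating the Batyrev--Cox rational representatives and reducing modulo $F^p$. On $\PP^n$ this is Griffiths' theorem, proved through the algebraic de Rham comparison and the algebraic description of $\nabla$. Here I must promote the \emph{fibrewise} residue isomorphism of Proposition \ref{iso} to the family $\pi\colon\mathscr F\to\mathcal Z$: one has to verify that the pole-order filtration $z\mapsto H^0(\var,\Omega^d_\var(kX_z))$ defines holomorphic subbundles realizing the Hodge filtration $F^\bullet\mathscr E^{d-1}$, and that the associated connecting map is differentiation in the coefficients of $f$. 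This should hold because the ingredients---the form $\Omega_0$, the pole filtration, and the exactness reductions encoded by the Jacobian ideal---are polynomial in the coefficients of $f$ and so vary holomorphically, and because orbifold cohomology carries a pure Hodge structure (\cite{Stee77,Tu86}) for which the Bott--Steenbrink--Danilov vanishing underlying Proposition \ref{iso} holds uniformly over $\mathcal Z$. Making this relative residue calculus precise, and checking its compatibility with $\nabla$, in the orbifold toric category is where the real work lies.
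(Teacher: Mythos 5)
Your proposal is correct and follows essentially the same route as the paper: the paper's proof is precisely the Griffiths differentiation of pole-order representatives, writing $\sigma_0$ locally as $\omega_i/f_i^{d-p+1}$, deforming $f_i\mapsto f_i+tg_i$, and computing $\frac{d}{dt}\big|_{t=0}\,\omega_i/(f_i+tg_i)^{d-p+1}=-(d-p+1)\,g_i\omega_i/f_i^{d-p+2}$, which is the cup product $\tilde\gamma_p(\tilde v\otimes\tilde\sigma)$ up to the factor absorbed into the normalization of $r_{p-1}$. The ``relative residue calculus'' you flag as the hard part is exactly what the paper delegates to the standard reference (\cite{CMP03}, Proposition 5.4.3) together with the Batyrev--Cox results, so your global Cox-coordinate version with $A\,\Omega_0/f^{d-p+1}$ is just a cosmetic variant of the paper's local computation.
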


\begin{proof} This is a standard computation, see \cite{CMP03}, Proposition 5.4.3.
Let $f_i$ be local representatives, with respect to a suitable cover $\{U_i\}$ of $\var$, of the section $f$. Via the isomorphism of Proposition \ref{iso}, we locally represent $\sigma_0$ by the meromorphic differential forms
$\omega_i/f_i^{d-p+1}$. A tangent vector $v\in T_{z_0}{\mathcal Z}$ represents a deformation
$f_i \mapsto f_i+tg_i$ where $t$ is a complex parameter, and $g_i$ are holomorphic functions. Then $\nabla_v(\sigma)$ is represented by
$$\left[\frac{d}{dt} \frac{\omega_i}{(f_i+tg_i)^{d-p+1}}\right]_{t=0} =
-(d-p+1) \frac{g_i\,\omega_i}{f_i^{d-p+2}}\,.$$
But the right-hand side of this equation is, up to a suitable factor, the argument
of the map $r_{p-1}$ in the right-hand side of equation \eqref{GM}.
\end{proof}

\begin{lemma} If $\alpha$ and $\eta$ are sections of $\mathscr E^{p,d-p-1}$ and
$\mathscr E^{d-p,p-1}$ respectively, then for every tangent vector $v\in T_{z_0}{\mathcal Z}$,
\begin{equation}\label{compa}\nabla_v\alpha\cup \eta= -   \alpha \cup \nabla_v\eta\,.
\end{equation}
\end{lemma}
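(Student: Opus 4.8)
The plan is to deduce the identity from two facts: that the Gauss--Manin connection $\nabla$ on $\mathscr E^{d-1}$ is flat and compatible with the cup product, and that the particular product $\alpha\cup\eta$ is forced to vanish by Hodge type. Both $\alpha$ and $\eta$ are sections of the ambient flat bundle $\mathscr E^{d-1}=\mathscr H^{d-1}\otimes_\C\cO_{\mathcal Z}$, even though the Hodge bundles $\mathscr E^{p,d-p-1}$ and $\mathscr E^{d-p,p-1}$ are themselves only $C^\infty$ subbundles. The cup product defines a pairing of local systems $\mathscr H^{d-1}\otimes\mathscr H^{d-1}\to\mathscr H^{2d-2}=R^{2d-2}\pi_\ast\C$, and since flat sections cup to flat sections this pairing is a morphism of local systems; hence $\nabla$ obeys the Leibniz rule
$$\nabla_v(\alpha\cup\eta)=(\nabla_v\alpha)\cup\eta+\alpha\cup(\nabla_v\eta)$$
for all $v\in T_{z_0}\mathcal Z$.

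First I would check the vanishing of the left-hand side. At each $z\in\mathcal Z$ the fiber $\alpha(z)$ has Hodge type $(p,d-1-p)$ and $\eta(z)$ has type $(d-p,p-1)$, so $\alpha(z)\cup\eta(z)$ has type $(d,d-2)$ and total degree $2(d-1)$. Since $X_z$ is a compact Kähler orbifold of dimension $d-1$, its top cohomology $H^{2d-2}(X_z,\C)$ is one-dimensional and carries a pure Hodge structure concentrated in type $(d-1,d-1)$ (using the purity results of \cite{Stee77,Tu86} recalled above). As $(d,d-2)\neq(d-1,d-1)$, the class $\alpha(z)\cup\eta(z)$ vanishes for every $z$, so $\alpha\cup\eta\equiv 0$ as a section of $\mathscr E^{2d-2}$, and therefore $\nabla_v(\alpha\cup\eta)=0$.

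Combining these two observations, the Leibniz rule immediately yields $(\nabla_v\alpha)\cup\eta=-\,\alpha\cup(\nabla_v\eta)$, which is the asserted equation \eqref{compa}. I expect the only genuine point requiring care---the main obstacle---to be the justification of the Leibniz rule in the orbifold setting: one must know that the cup-product pairing of the local systems $\mathscr H^{d-1}$ is $\nabla$-parallel, equivalently that $\mathscr H^{2d-2}$ is the trivial rank-one local system on which $\nabla$ acts by ordinary differentiation. This is standard for smooth families, and carries over here because $R^\bullet\pi_\ast\C$ is a genuine local system over the locus $\mathcal Z$ of quasi-smooth fibers and the cup product is defined fiberwise by topology; the Hodge-theoretic input, namely the purity of $H^{2d-2}$, is exactly what was cited earlier in the excerpt.
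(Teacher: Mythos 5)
Your proof is correct and takes essentially the same route as the paper: the paper likewise invokes the compatibility of the Gauss--Manin connection with the cup product (the Leibniz rule) and then notes that $\alpha\cup\eta=0$ because it is a section of $\mathscr E^{d,d-2}$, which vanishes for Hodge-type reasons on the $(d-1)$-dimensional fibers. Your additional justifications---flatness of the topological cup-product pairing on $R^\bullet\pi_\ast\C$ and the purity of $H^{2d-2}(X_z)$ in type $(d-1,d-1)$---merely spell out what the paper treats as standard.
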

\begin{proof} The Gauss-Manin connection is compatible with the cup product by definition, i.e.,
$$\nabla_v (\alpha\cup\eta) =  \nabla_v\alpha\cup \eta +  \alpha \cup \nabla_v\eta\,. $$
But $\alpha\cup\eta=0$ because it is an element in $\mathscr E^{d,d-2}$.
\end{proof}

\subsection{The moduli space of hypersurfaces in $\var$}
Let ${\operatorname{Aut}}_\beta(\var)$ be the subgroup of $ \operatorname{Aut}(\var) $
which preserves the grading $\beta$. The coarse moduli space $\mathcal M_\beta$ for the general quasi-smooth hypersurfaces in $\var$ with  divisor class ${\beta}$  may be constructed as a quotient
\begin{equation}\label{moduli} U/\widetilde{\operatorname{Aut}}_\beta(\var)\,,
\end{equation} \cite{BaCox94, AGM}, where $U$ is an open subset of
$H^0(\var,\cO_{\var}(X))$, and $\widetilde{\operatorname{Aut}}_\beta(\var)$ is the unique nontrivial extension
$$ 1 \to D(\Sigma) \to \widetilde{\operatorname{Aut}}_\beta(\var) \to {\operatorname{Aut}}_\beta(\var) \to 1\,. $$
By differentiating, we have    a surjective map
$$\kappa_\beta\colon H^0(\var, \cO_\var(X)) \to T_X \mathcal M_{\beta}\,,$$ which is the analogue of the  Kodaira-Spencer map.

The  local system $\mathscr H^{d-1}$
and its various sub-systems do not descend to the moduli space $\mathcal M_{\beta}$, because the group ${\operatorname{Aut}}_\beta(\var)$ is not connected.  Nevertheless, this group
has a connected subgroup
$\operatorname{Aut}^0_\beta(\var)$ of finite order, and, perhaps after suitably shrinking $U$, the quotient $\mathcal M^0_\beta \stackrel{def}{=} U/\operatorname{Aut}^0_\beta(\var)$ is a finite \'etale covering
 of $\mathcal M_\beta$ \cite{CoxDon, AGM}.

\begin{prop} There is a morphism
\begin{equation}\label{gamma2} \gamma_p\colon T_X \mathcal M_{\beta} \otimes PH^{p,d-1-p}(X)\to  PH^{p-1,d-p}(X) \end{equation}
such that the   diagram
\begin{equation*}\label{commuta2}\xymatrix{
H^0(\var, \cO_\var(X))
\otimes H^0(\var,\Omega_\var^d((d-p)X))
\ar[r]^{\ \ \ \ \ \ \ \ \ \cup }\ar[d]_{\kappa_\beta\otimes r_p} &  H^0(\var,\Omega_\var^d((d-p+1)X))
\ar[d]_{r_{p-1}} \\
T_X \mathcal M_{\beta} \otimes PH^{p,d-1-p}(X) \ar[r]^{\ \ \ \ \ \ \  \gamma_p }& PH^{p-1,d-p}(X) }
\end{equation*}
commutes.
\end{prop}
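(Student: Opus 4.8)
The plan is to obtain $\gamma_p$ by descending the morphism already built into the diagram \eqref{commuta} along the natural surjection $q\colon T_{z_0}\mathcal Z \to T_X\mathcal M_{\beta}$. Write $\gamma_p^{\mathcal Z}\colon T_{z_0}\mathcal Z\otimes PH^{p,d-1-p}(X)\to PH^{p-1,d-p}(X)$ for the bottom arrow of \eqref{commuta}. From the construction \eqref{moduli} the Kodaira--Spencer map factors as $\kappa_\beta=q\circ\phi$: after passing to the connected group $\operatorname{Aut}^0_\beta(\var)$ and to the finite \'etale cover $\mathcal M^0_\beta = U/\operatorname{Aut}^0_\beta(\var)$ of $\mathcal M_\beta$ (so that $T_X\mathcal M^0_\beta = T_X\mathcal M_\beta$), the space $T_X\mathcal M_\beta$ is the quotient of $T_{z_0}\mathcal Z = H^0(\var,\cO_\var(X))/\C(f)$ by the image of the tangent space to the $\operatorname{Aut}^0_\beta(\var)$-orbit through $z_0$. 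Thus $q$ is surjective with $\ker q$ equal to that orbit tangent space, and it suffices to prove that $\gamma_p^{\mathcal Z}(v\otimes\alpha)=0$ for all $\alpha$ whenever $v\in\ker q$; the map $\gamma_p$ is then the unique factorization with $\gamma_p^{\mathcal Z}=\gamma_p\circ(q\otimes\id)$.

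For the vanishing, a tangent vector $v$ along the orbit is generated by a one-parameter subgroup $\psi_t\subset\operatorname{Aut}^0_\beta(\var)$, so that $X_{z(t)}=\psi_t(X)$ and the tautological family $\mathscr F$ is equivariantly trivial over the orbit. Hence the Gauss--Manin (flat) identification of the fibres $H^{d-1}(X_{z(t)})$ along the orbit coincides with pullback by $\psi_t$, which is an isomorphism of pure Hodge structures; the Hodge subbundles $F^p\mathscr E^{d-1}$ are therefore flat in the orbit directions, so $\nabla_v$ preserves $F^p\mathscr E^{d-1}$ and $\pi_{p-1}(\nabla_v\sigma)=0$, i.e. $\gamma_p^{\mathcal Z}(v\otimes\alpha)=0$. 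The same conclusion can be read off concretely from the Jacobian ring: by Proposition \ref{isoring} (recall $d$ is odd, so the exceptional value $p=d/2-1$ never occurs) the residue maps $r_p,r_{p-1}$ identify $PH^{p,d-p-1}(X)$ and $PH^{p-1,d-p}(X)$ with the graded pieces $R(f)_{(d-p)\beta-\beta_0}$ and $R(f)_{(d-p+1)\beta-\beta_0}$, under which the cup product becomes multiplication in $R(f)=S(\Sigma)/J(f)$; since an infinitesimal automorphism acts on $f$ by a derivation of the Cox ring that is a monomial multiple of some $\partial/\partial z_i$ (by Cox's description of $\operatorname{Aut}(\var)$ for a complete simplicial toric variety), a lift $\tilde v\in H^0(\var,\cO_\var(X))$ of an orbit direction lies in $J(f)_\beta$, whence $\tilde v\cdot\tilde\alpha\in J(f)$ maps to $0$ in $R(f)$.

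Granting the vanishing, the existence of $\gamma_p$ follows, and the commutativity of the displayed diagram is then formal. Chasing the left and bottom edges gives
$$\gamma_p\circ(\kappa_\beta\otimes r_p)=\gamma_p\circ(q\otimes\id)\circ(\phi\otimes r_p)=\gamma_p^{\mathcal Z}\circ(\phi\otimes r_p),$$
which by the commutativity of \eqref{commuta} equals $r_{p-1}\circ\tilde\gamma_p$, the composition along the top and right edges. This is exactly the assertion.

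I expect the only genuine difficulty to be the vanishing step, and specifically the analysis of the orbit directions: one must either justify that flat transport along an $\operatorname{Aut}^0_\beta(\var)$-orbit is literally the Hodge-theoretic pullback by $\psi_t$, or, in the algebraic route, invoke the structure theory of $\operatorname{Aut}(\var)$ for simplicial complete toric varieties to see that the image of $\operatorname{Lie}(\operatorname{Aut}^0_\beta(\var))$ in $H^0(\var,\cO_\var(X))$ is contained in $J(f)_\beta$. The remaining bookkeeping---reducing to the connected group and the \'etale cover $\mathcal M^0_\beta$, and checking $\kappa_\beta=q\circ\phi$---is routine.
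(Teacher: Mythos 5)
Your proof is correct, and its skeleton is the paper's: reduce to the finite \'etale cover $\mathcal M^0_\beta$ (where tangent spaces agree with those of $\mathcal M_\beta$), obtain $\gamma_p$ from the Gauss--Manin connection, and deduce commutativity from the already established diagram \eqref{commuta}. The genuine difference is in how the descent step is handled. The paper never factors the map through $\ker q$ explicitly: it pushes the local system forward along $\rho\colon\mathcal Z\to\mathcal M^0_\beta$, invoking topological proper base change to get $\rho^\ast\rho_\ast\mathscr H^{d-1}\simeq\mathscr H^{d-1}$, so that the local system and hence the Gauss--Manin connection descend, the connection being trivial along the fibers of $\rho$; then $\gamma_p(v\otimes\alpha)=\nabla_v\alpha$ is defined directly downstairs. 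You instead prove by hand exactly the fact the paper extracts from sheaf descent --- that the bottom arrow of \eqref{commuta} annihilates $\ker q\otimes PH^{p,d-1-p}(X)$ --- via equivariance of the tautological family over an orbit: flat transport along a one-parameter subgroup is pullback by automorphisms, hence a Hodge isometry, so the flat extension of $\sigma_0$ along the orbit curve stays of type $(p,d-1-p)$ and $\pi_{p-1}(\nabla_v\sigma)=0$; this is legitimate because \eqref{GM} shows the output is independent of the chosen extension $\sigma$. Your route is more self-contained (it avoids having to check that $\mathscr H^{d-1}$ is constant on the fibers of $\rho$, which the paper's base-change assertion implicitly requires), at the cost of working vector-by-vector rather than getting the descended connection as a global object on $\mathcal M^0_\beta$, which the paper reuses later. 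Two small caveats on your algebraic alternative: the relevant Lie algebra elements are \emph{linear combinations} of monomial derivations $x^m\,\partial/\partial z_i$, not single monomial multiples (harmless, since each term lands in $J(f)_\beta$ --- indeed $\ker\kappa_\beta=J(f)_\beta$ is exactly \cite[Prop.~13.7]{BaCox94}); and that route leans on the identifications of Proposition \ref{isoring}, so it requires $p\ne d/2-1$ --- fine for odd $d$, the case where the Jacobian-ring description is used, but it is your Hodge-theoretic argument, like the paper's descent, that covers all $p$ and $d$.
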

\begin{proof}
It suffices to prove the Proposition with $\mathcal M_\beta$ replaced by $\mathcal M^0_\beta$; in fact the tangent spaces at points $\mathcal M^0_\beta$ are canonically isomorphic to the tangent spaces at the image points in $\mathcal M_\beta$.

If $\rho\colon {\mathcal Z} \to \mathcal M^0_\beta$ is the induced map (where ${\mathcal Z}$ has been suitably restricted),  the local system
$\mathscr H^{d-1}$ descend to a local system $\rho_\ast \mathscr H^{d-1}$ on $\mathcal M^0_\beta$, and $\rho^\ast \rho_\ast \mathscr H^{d-1}\simeq  \mathscr H^{d-1}$ (the natural morphism $\mathscr H^{d-1} \to \rho^\ast \rho_\ast \mathscr H^{d-1}$ is an isomorphism on the stalks  due to topological base change; note that $\rho$ is proper). Thus we obtain on $\mathcal M^0_\beta$
holomorphic bundles that are equipped with a Gauss-Manin connection, which   is trivial in the direction of the fibers of $\rho$. If we define again  $\gamma_p$ by $\gamma_p(v\otimes \alpha)=\nabla_v(\alpha)$ (where $\nabla$ is now the Gauss-Manin connection on $\mathcal M^0_\beta$), the commutavity of the diagram in the statement follows from the commutativity of the diagram \eqref{commuta}.
\end{proof}

The tangent space $T_X\mathcal M_{\beta}$ at a point representing a hypersurface $X$ is naturally isomorphic to the degree ${\beta}$
summand of the Jacobian ring of $f$, that is, $T_X\mathcal M_{\beta}\simeq R(f)_{\beta}$ \cite{BaCox94}. Moreover,  by Proposition \ref{isoring}, $ PH^{p,d-p-1}(X) \simeq R(f)_{(d-p)\beta-\beta_0 }$.

\begin{prop} \label{gammaisring} Under these isomorphisms, the morphism $\gamma_p$ in equation \eqref{gamma2} coincides with the multiplication in the ring $R(f)$,
$$ R(f)_\beta \otimes R(f)_{(d-p)\beta-\beta_0} \to R(f)_{(d-p+1)\beta-\beta_0}\,.$$
\end{prop}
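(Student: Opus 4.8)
The plan is to read off the statement from the commutative diagram of the preceding Proposition, after rewriting each of its three vertical arrows and its top horizontal arrow in terms of the graded pieces of the Jacobian ring $R(f)$. Since all the vertical arrows will turn out to be the canonical graded projections $S(\Sigma)\twoheadrightarrow R(f)$ and the top arrow will be polynomial multiplication, the conclusion follows formally from the fact that $J(f)$ is a homogeneous ideal.

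First I would make the two families of isomorphisms explicit. Under the numerator description of sections of $\Omega_\var^d(kX)$ in the Cox ring, Proposition~\ref{isoring} identifies the residue maps $r_p$ and $r_{p-1}$ with the canonical projections
$$S(\Sigma)_{(d-p)\beta-\beta_0}\twoheadrightarrow R(f)_{(d-p)\beta-\beta_0},\qquad
S(\Sigma)_{(d-p+1)\beta-\beta_0}\twoheadrightarrow R(f)_{(d-p+1)\beta-\beta_0},$$
this being precisely the Batyrev--Cox pole-order reduction underlying that Proposition. Likewise, under the isomorphism $T_X\mathcal M_\beta\simeq R(f)_\beta$, the Kodaira--Spencer map $\kappa_\beta\colon H^0(\var,\cO_\var(X))=S(\Sigma)_\beta\to T_X\mathcal M_\beta$ becomes the projection $S(\Sigma)_\beta\twoheadrightarrow R(f)_\beta$. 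The substantive point here is that $\ker\kappa_\beta$ consists of the infinitesimal deformations of $f$ that are tangent to the orbit of $\widetilde{\operatorname{Aut}}_\beta(\var)$, and these are exactly the directions $\sum_i a_i\,\partial f/\partial z_i$ spanning $J(f)_\beta$.

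Next I would identify the top arrow. Writing such a section via its Cox-ring numerator $A$, of the degree dictated by Proposition~\ref{isoring}, the cup product with $g\in H^0(\var,\cO_\var(X))=S(\Sigma)_\beta$ has numerator $gA$, so it is simply polynomial multiplication
$$S(\Sigma)_\beta\otimes S(\Sigma)_{(d-p)\beta-\beta_0}\to S(\Sigma)_{(d-p+1)\beta-\beta_0},$$
which is degree-additive as required. With these identifications the diagram of the preceding Proposition becomes a square whose verticals are the graded projections to $R(f)$ and whose top is this multiplication. Because $J(f)$ is homogeneous, multiplication by $S(\Sigma)_\beta$ preserves it, so the top multiplication descends to $R(f)_\beta\otimes R(f)_{(d-p)\beta-\beta_0}\to R(f)_{(d-p+1)\beta-\beta_0}$; since the vertical projections are surjective and the square commutes, the induced map $\gamma_p$ is uniquely determined and coincides with this ring multiplication. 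The main obstacle is the computation identifying $\ker\kappa_\beta$ with $J(f)_\beta$ (equivalently, that the derivatives of $f$ span the tangent space to the automorphism orbit), together with keeping the pole-order and degree bookkeeping in the residue identification consistent, so that the descended map is literally multiplication rather than multiplication up to a nonzero scalar.
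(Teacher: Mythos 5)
Your proposal is correct and follows essentially the same route as the paper: identify the residue maps and the Kodaira--Spencer map with the graded projections $S(\Sigma)\twoheadrightarrow R(f)$, identify the cup product with polynomial multiplication in the Cox ring, and conclude by surjectivity of the vertical arrows that the diagram descends to ring multiplication. The only difference is presentational --- the paper delegates the three key identifications to Batyrev--Cox (Theorem~9.7, the proof of Theorem~10.6, and the proof of Proposition~13.7) and arranges them in a three-dimensional diagram, whereas you sketch the same identifications directly, including the observation that $\ker\kappa_\beta$ is the tangent space to the $\widetilde{\operatorname{Aut}}_\beta(\var)$-orbit, i.e.\ $J(f)_\beta$.
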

\begin{proof} Theorem 9.7 in 
\cite{BaCox94} implies 
 $$H^0(\var,\Omega_\var^d((d-p)X) / H^0(\var,\Omega_\var^d((d-p-1)X) \simeq S_{(d-p)\beta-\beta_0}\,,$$ 
and, moreover, $H^0(\var,\co_\var(X))$ $\simeq S_\beta$; the cup product corresponds to the product in the ring $S$. This implies that the ``top square'' of the 3-dimensional diagram
$$
\xymatrix@C=-25pt{
\displaystyle {H^0(\var, \cO_\var(X)) \otimes \atop H^0(\var,\Omega_\var^d((d-p)X)) }
 \ar[rr]^\cup\ar[dr]\ar[dd]_{\kappa_\beta\otimes r_p} &&
 H^0(\var,\Omega_\var^d((d-p+1)X))  \ar'[d]_{r_{p-1}}[dd]\ar[dr] \\ &
S_\beta\otimes S_{(d-p)\beta-\beta_0}\ar[dd] \ar[rr] &&
  S_{(d-p+1)\beta-\beta_0}\ar[dd] \\
T_X \mathcal M_{\beta} \otimes PH^{p,d-1-p}(X) \ar'[r][rr]\ar[dr] &&
PH^{p-1,d-p}(X) \ar[dr] \\ &
R(f)_\beta\otimes R(f)_{(d-p)\beta-\beta_0} \ar[rr] &&
R(f)_{(d-p+1)\beta=\beta_0}
}$$
commutes. We need to show that the ``bottom square'' commutes as well, which will follow from the commutativity of the ``side squares'', and the surjectivity of the morphism $\kappa_\beta\otimes r_p$.
The commutativity of the   diagram  on the right is contained in the proof of Theorem 10.6 in \cite{BaCox94}. The commutativity of the   diagram on the left  follows from the commutativity of the previous diagram, with $d-p+1$ replaced by $d-p$, and the commutativity of
$$\xymatrix{
H^0(\var, \cO_\var(X))  \ar[d]\ar[r]^(.7){\sim} & S_\beta\ar[d]  \\ T_X \mathcal M_{\beta} \ar[r]^{\sim} & R(f)_\beta
}$$
which is shown in the proof of Proposition 13.7 in \cite{BaCox94}.
\end{proof}

 \subsection{Picard group}
Our aim is  now to prove the following result. Let us recall that a property is said to be  {\em very general} if it holds in the complement of a countable union of subschemes of positive codimension \cite{Lazarsfeld}. Also recall that the
Picard number $\rho(X)$ is the rank of the N\'eron-Severi group, i.e., $\rho(X) = \dim_\Q  (H^{1,1}(X,\C) \cap H^2(X,\Q))$.

\begin{thm} Let $\var$ be a 3-dimensional complete simplicial toric variety,  $L$ an ample line bundle on $\var$, and $X$ a very general quasi-smooth hypersurface in the linear system $\vert L\vert$.
If  the morphism $\gamma_2\colon T_X\mathcal M_\beta \otimes PH^{2,0} (X)  \to  PH^{1,1} (X)$
is surjective, then   $X$ and $\var $ have the same Picard number.
\label{picard}\end{thm}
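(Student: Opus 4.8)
The plan is to show that for a very general $X$ every rational $(1,1)$-class on $X$ comes from the ambient variety, so that $\rho(X)=\rho(\var)$. \textbf{Reduction.} Since $\var$ is a complete simplicial toric threefold its cohomology is of pure Hodge--Tate type, so $H^2(\var,\C)=H^{1,1}(\var)$ and $\dim_\Q H^2(\var,\Q)=\rho(\var)$. By Lemma~\ref{ortho} the decomposition $H^2(X,\Q)=i^\ast H^2(\var,\Q)\oplus PH^2(X)_\Q$ is orthogonal and compatible with the Hodge structures; intersecting with $H^{1,1}(X)$, and using that $i^\ast H^2(\var,\C)$ is entirely of type $(1,1)$ and that $i^\ast$ is injective, one obtains $\rho(X)=\rho(\var)+\dim_\Q\bigl(PH^{1,1}(X)\cap PH^2(X,\Q)\bigr)$. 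Thus it suffices to prove that for a very general $X$ one has $PH^{1,1}(X)\cap PH^2(X,\Q)=0$.

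\textbf{The Noether--Lefschetz loci.} I would work on $\mathcal Z$ (equivalently on $\mathcal M^0_\beta$), where $\mathscr H^{d-1}$, its Hodge bundles, and the Gauss--Manin connection $\nabla$ live. A nonzero primitive rational Hodge class defines by flat transport a multivalued flat section $\lambda$ of the primitive local system, and the associated locus $NL_\lambda=\{z:\lambda(z)\in PH^{1,1}(X_z)\}$ is closed analytic; the set of $z$ carrying \emph{some} primitive Hodge class is the union of the at most countably many $NL_\lambda$. Since a very general $X$ avoids every proper $NL_\lambda$, it is enough to show each $NL_\lambda$ with $\lambda\neq0$ is a \emph{proper} subvariety. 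The infinitesimal variation computes its tangent space: because $i^\ast H^2(\var)$ is horizontal and of type $(1,1)$, the obstruction to $\lambda$ remaining of type $(1,1)$ in a direction $v$ is the projection of $\nabla_v\lambda$ to $PH^{0,2}(X)$, which is exactly $\gamma_1(v\otimes\lambda)$. Hence $T_zNL_\lambda=\{v:\gamma_1(v\otimes\lambda)=0\}$, and $NL_\lambda$ is proper provided $\gamma_1(\,\cdot\,\otimes\lambda)\not\equiv0$. So everything reduces to the implication: if $\lambda\in PH^{1,1}(X)$ satisfies $\gamma_1(v\otimes\lambda)=0$ for all $v$, then $\lambda=0$.

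\textbf{The crux.} This is where the hypothesis on $\gamma_2$ enters, via the self-adjointness of the infinitesimal variation. For $\alpha\in PH^{2,0}(X)$, $\lambda\in PH^{1,1}(X)$ and any $v\in T_X\mathcal M_\beta$, equation~\eqref{compa} gives $\langle\gamma_2(v\otimes\alpha),\lambda\rangle=\langle\nabla_v\alpha,\lambda\rangle=-\langle\alpha,\nabla_v\lambda\rangle=-\langle\alpha,\gamma_1(v\otimes\lambda)\rangle$, where $\langle\,,\rangle$ is the cup-product pairing and the projections $\pi_1,\pi_0$ are absorbed because cup product pairs $PH^{2,0}$ with $PH^{0,2}$ and $PH^{1,1}$ with itself. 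If $\gamma_1(v\otimes\lambda)=0$ for all $v$, the right-hand side vanishes, so $\lambda$ is orthogonal to the image of $\gamma_2$; surjectivity of $\gamma_2$ then forces $\lambda\perp PH^{1,1}(X)$, and since the cup-product pairing is nondegenerate on $PH^{1,1}(X)$ (Hodge--Riemann, together with the orthogonal splitting of Lemma~\ref{ortho}), we conclude $\lambda=0$. This finishes the reduction, and hence the theorem.

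\textbf{Main obstacle.} I expect the delicate points to be two. First, the identification $T_zNL_\lambda=\ker\gamma_1(\,\cdot\,\otimes\lambda)$ must be justified in the orbifold setting; it rests on the pure Hodge structure of the quasi-smooth fibres and Griffiths transversality for the family, together with the fact that the fixed part of the cohomology is horizontal and of type $(1,1)$ and so never contributes an obstruction. Second, one must secure the passage from the infinitesimal to the global statement: that the Hodge loci genuinely form a countable union of closed subvarieties, so that removing the proper components $NL_\lambda$ leaves a very general $X$; the self-adjointness plus surjectivity argument only yields positive codimension of each component, and countability of the Hodge classes is needed to conclude.
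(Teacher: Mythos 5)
Your proposal is correct and follows essentially the same route as the paper: the orthogonal splitting of Lemma \ref{ortho}, parallel transport of rational classes over the universal cover of $\mathcal Z$ to reduce to countably many Hodge loci (Lemma \ref{verygeneral}), and the adjointness \eqref{compa} of the Gauss--Manin connection with the cup product, combined with surjectivity of $\gamma_2$ and nondegeneracy of the pairing on the primitive part, to kill primitive rational $(1,1)$ classes. The only (harmless) divergence is that the paper's infinitesimal lemma applies the pairing argument to fully flat classes ($\nabla_v\alpha=0$ for all $v$), which is all one gets when the locus $\bar{\mathcal Z}_\alpha$ is everything, so it never needs the tangent-space identification $T_zNL_\lambda=\ker\gamma_1(\,\cdot\,\otimes\lambda)$ that you yourself flag as the delicate point --- your first-order refinement is also valid, but unnecessary for properness of the loci.
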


Theorem \ref{picard} will follow from two Lemmas. In the first Lemma no restriction on the dimension $d$ of $\var$ needs to be made, in the second we shall assume that $d$ is odd.

The first  Lemma is an  ``infinitesimal Noether-Lefschetz theorem", such as  Theorem 7.5.1 in \cite{CMP03}.

 Denote by $H^{d-1}_T(X) \subset H^{d-1}(X)$ the subspace
of  the cohomology classes that are annihilated by the action of the Gauss-Manin connection. Coefficients may be taken in $\C$ or $\Q$. Note that $H^{d-1}_T(X)$ has a Hodge structure.

\begin{lemma} For a given $p$ with $ 1 \le p \le d-1$,   assume that
the morphism $$\gamma_p\colon T_X \mathcal M_{\beta} \otimes PH^{d-p,p-1}(X)\to  PH^{d-p-1,p}(X) $$ is surjective. Then $H^{p,d-1-p}_T(X)=i^\ast(H^{p,d-1-p}(\var))$.
\end{lemma}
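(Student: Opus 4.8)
The plan is to prove the nontrivial inclusion $H^{p,d-1-p}_T(X)\subseteq i^\ast(H^{p,d-1-p}(\var))$, since the reverse inclusion is immediate: a class pulled back from the \emph{fixed} ambient variety $\var$ gives a Gauss--Manin parallel (hence invariant) global section of $\mathscr{H}^{d-1}$, and $i^\ast$ is a morphism of Hodge structures, so $i^\ast(H^{p,d-1-p}(\var))\subseteq H^{p,d-1-p}_T(X)$. To treat the nontrivial inclusion I would first invoke the orthogonal, Hodge-compatible splitting of Lemma~\ref{ortho}, noting that it is in fact a splitting of local systems: both $i^\ast(H^{d-1}(\var))$ and its orthogonal complement $PH^{d-1}(X)$ are monodromy invariant. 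Writing $\alpha=i^\ast\beta+\sigma_0$ for a class $\alpha\in H^{p,d-1-p}_T(X)$, with $\sigma_0\in PH^{p,d-1-p}(X)$, and subtracting the invariant ambient part $i^\ast\beta$, the whole problem reduces to showing that the invariant primitive part vanishes, i.e. that any $\sigma_0\in PH^{p,d-1-p}(X)\cap H^{d-1}_T(X)$ is zero.

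The geometric heart of the matter is that such a $\sigma_0$ is simultaneously Gauss--Manin invariant and of pure Hodge type $(p,d-1-p)$. By the theorem of the fixed part for the polarized variation of Hodge structure carried by the primitive middle cohomology of the family (the invariants form a constant sub-Hodge structure, and the purity, analytic variation, and polarization recalled earlier are exactly the inputs that make this available in the orbifold setting), the flat section $s$ through $\sigma_0$ stays of pure type $(p,d-1-p)$. Hence $s$ is at once the parallel extension of $\sigma_0$ and a section of the Hodge bundle $\mathscr{E}^{p,d-1-p}$. Consequently, under the companion map of \eqref{gamma2}, namely $T_X\mathcal M_\beta\otimes PH^{p,d-1-p}(X)\to PH^{p-1,d-p}(X)$, the element $v\otimes\sigma_0$ is sent to $\pi_{p-1}(\nabla_v s)$, which is $0$ because $\nabla s=0$.

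It remains to convert this vanishing into the vanishing of $\sigma_0$ via the surjectivity hypothesis. For $v\in T_X\mathcal M_\beta$ and $\tau\in PH^{d-p,p-1}(X)$, pick a Hodge section $\tilde\tau$ of $\mathscr{E}^{d-p,p-1}$ with $\tilde\tau(z_0)=\tau$; the compatibility relation \eqref{compa} then yields $\langle\pi_{p-1}(\nabla_v s),\tau\rangle=\langle\nabla_v s,\tau\rangle=-\langle\sigma_0,\nabla_v\tilde\tau\rangle=-\langle\sigma_0,\gamma_p(v\otimes\tau)\rangle$, where the primitive projections may be inserted or dropped freely because $\sigma_0$ and $\tau$ are primitive of Hodge types complementary to the respective projected components, so they annihilate both the ambient summand and the off-type Hodge pieces. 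The left-hand side is zero by the previous paragraph, whence $\sigma_0$ is orthogonal, for the intersection pairing, to the entire image of the hypothesis map $\gamma_p\colon T_X\mathcal M_\beta\otimes PH^{d-p,p-1}(X)\to PH^{d-p-1,p}(X)$. Since by assumption this map is surjective onto $PH^{d-p-1,p}(X)=PH^{d-1-p,p}(X)$, which is the Poincar\'e dual of $PH^{p,d-1-p}(X)$, and since the Hodge--Riemann relations make the cup pairing between $PH^{p,d-1-p}(X)$ and $PH^{d-1-p,p}(X)$ perfect, we conclude $\sigma_0=0$.

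The step I expect to be the main obstacle, and the one requiring the most care in the toric/orbifold setting, is the theorem of the fixed part: that an invariant class of pure Hodge type extends to a flat section of \emph{constant} Hodge type. This needs the primitive middle cohomology of the family of quasi-smooth hypersurfaces to genuinely underlie a polarizable variation of Hodge structure over (an \'etale cover of) the moduli base, which is precisely why the pure Hodge structures on projective orbifolds, their analytic variation, and hard Lefschetz with the Hodge--Riemann relations were recalled beforehand. A secondary point to check carefully is the Hodge-type bookkeeping together with the primitivity of $\sigma_0$ and $\tau$, which is what licenses deleting the projections $\pi_{p-1}$ and $\pi_{d-p-1}$ when passing to the intersection pairing.
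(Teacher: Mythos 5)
Your proposal is correct and follows essentially the same route as the paper's proof: reduce to a Gauss--Manin--flat primitive class, integrate by parts against the image of $\gamma_p$ using the compatibility relations \eqref{GM} and \eqref{compa}, conclude that the class is orthogonal to $PH^{d-1-p,p}(X)$, and finish by nondegeneracy of the pairing via the orthogonal splitting of Lemma \ref{ortho}. The only differences are expository: you invoke the theorem of the fixed part explicitly (and rightly flag it as the point needing care) where the paper absorbs it into the remark that $H^{d-1}_T(X)$ carries a Hodge structure, and you conclude with perfectness of the primitive pairing where the paper deduces orthogonality to all of $H^{d-1-p,p}(X)$ from Lemma \ref{ortho} --- neither changes the substance of the argument.
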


\begin{proof} Replace $ \mathcal M_\beta$ by $\mathcal M^0_\beta$,
and consider the local systems $\mathscr E^{d-1}$ and $\mathscr{PE}^{p,d-p-1}$ on $\mathcal M^0_\beta$.
Take $$\alpha\in H^{p,d-1-p}_T(X)\cap PH^{p,d-1-p}(X).$$
We regard classes in $PH^{p,d-1-p}(X)$ as elements in the fiber of $\mathscr{PE}^{p,d-p-1}$
at the point $[X]\in M^0_\beta$.
By hypothesis $\beta\in PH^{d-p-1,p}(X)$ can be written as
$\beta = \sum_i \gamma_p(t_i\otimes\eta_i)$ with $\eta_i\in PH^{d-p,p-1}(X)$. Then
by equations \eqref{GM} and \eqref{compa}
$$\langle \alpha,\beta\rangle = \sum_i \langle \alpha, \gamma_p(t_i\otimes\eta_i) \rangle
= \sum_i \langle  \alpha,\nabla_{t_i}\eta_i\rangle
=  - \sum_i \langle \nabla_{t_i} \alpha,\eta_i\rangle=0.$$
So $\alpha$ is orthogonal to $PH^{d-1-p,p}(X)$. By Lemma \ref{ortho}, this
means that $\alpha$ is orthogonal to the whole group $H^{d-1-p,p}(X)$, hence
it is zero. Therefore   $H^{p,d-1-p}_T(X)=i^\ast(H^{p,d-1-p}(\var))$.
\end{proof}

For any variety $Y$  we define $H^{m,m}(Y,\Q) = H^{m,m}(Y,\C)\cap H^{2m}(Y,\Q)$.

\begin{lemma}\label{verygeneral} Let $d=2m+1\ge 3$, and assume that 
 the  hypotheses of the previous Lemma hold for $p=m$. Then for $z$ away from a countable union of subschemes of ${\mathcal Z}$ of positive codimension one has
$$H^{m,m}(X_z,\Q) = \operatorname{im}[ i^\ast\colon H^{m,m}(\var,\Q) \to H^{2m}(X_z.\Q)].$$
\end{lemma}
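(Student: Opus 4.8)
The strategy is the standard Noether–Lefschetz argument: translate the equality $H^{m,m}(X_z,\Q) = \operatorname{im}(i^\ast)$ into a statement about the locus where some Hodge class fails to come from the ambient variety, and then show — using the infinitesimal result of the previous Lemma together with the monodromy of the Gauss–Manin connection — that this locus is a countable union of proper analytic subschemes. First I would reduce to $\mathcal M^0_\beta$, over which the local system $\mathscr H^{d-1}$ is defined, and work with the variable (equivalently primitive) part; by Lemma \ref{ortho} the splitting $H^{d-1}(X_z,\Q)=i^\ast H^{d-1}(\var,\Q)\oplus PH^{d-1}(X_z,\Q)$ is orthogonal and compatible with the Hodge structures, so it suffices to show that for very general $z$ the primitive Hodge classes in middle degree vanish, i.e. $PH^{m,m}(X_z,\Q)\cap PH^{2m}(X_z,\Q)=0$.

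Next I would set up the Hodge loci. For a flat (multivalued) section $\lambda$ of $\mathscr H^{d-1}_\Q$ over a simply connected open $V\subset\mathcal M^0_\beta$, consider the locus $V_\lambda=\{z\in V : \lambda(z)\in F^m\mathscr E^{d-1}_z\}$, where $F^m$ is the Hodge bundle; since we are in middle degree $d-1=2m$, membership in $F^m$ together with $\lambda$ being rational forces $\lambda(z)\in H^{m,m}(X_z,\Q)$. Each $V_\lambda$ is a closed analytic subset of $V$, cut out by the vanishing of the projections $\pi_{m-1}(\lambda),\dots,\pi_0(\lambda)$ of $\lambda$ into the lower Hodge pieces. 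The key point is to show that $V_\lambda$ is a \emph{proper} subset whenever $\lambda$ has nonzero primitive part. Here the infinitesimal Lemma enters: the tangent space to $V_\lambda$ at a point where $\lambda$ lands in $PH^{m,m}$ is contained in the kernel of the map $v\mapsto \pi_{m-1}(\nabla_v\lambda)$, which by the commutative diagram \eqref{commuta} and Proposition \ref{gammaisring} is controlled by $\gamma_m$. The surjectivity hypothesis for $p=m$, combined with the orthogonality computation in the previous Lemma, shows that a primitive class $\alpha$ fixed by the local monodromy must be orthogonal to all of $PH^{d-1-m,m}=PH^{m,m}$, hence to itself, hence zero — so no nonzero primitive rational Hodge class can be flat, and $V_\lambda\subsetneq V$ is proper for every $\lambda$ with nontrivial primitive component.

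Finally I would globalize: the set of relevant flat sections $\lambda$ is the orbit of the monodromy action on the countable set $\mathscr H^{d-1}_{\Z,z_0}$ (the integral classes in a fixed fiber), so the exceptional locus is the countable union $\bigcup_\lambda V_\lambda$ over all such $\lambda$ with nonzero primitive part, each a proper closed analytic subset; its complement is exactly where $PH^{m,m}(X_z,\Q)=0$, giving the asserted equality away from a countable union of positive-codimension subschemes of $\mathcal Z$. The main obstacle is the passage from the infinitesimal statement to the positive-codimensionality of each $V_\lambda$: one must verify that the infinitesimal condition from the previous Lemma genuinely shows that the flat class, viewed as a section of $\mathscr{PE}^{m,m}$, cannot stay of Hodge type $(m,m)$ to first order — i.e. that the differential of the period map does not vanish on the primitive part. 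This is precisely where the surjectivity of $\gamma_m$ is used, via the adjunction identity $\langle\nabla_v\alpha,\eta\rangle=-\langle\alpha,\nabla_v\eta\rangle$ of equation \eqref{compa}; I would want to be careful that the argument applies uniformly over the base so that the union really is countable, using that the monodromy group is countable and that $\mathcal M^0_\beta\to\mathcal M_\beta$ is a finite étale cover so the loci descend to $\mathcal Z$.
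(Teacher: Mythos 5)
Your proposal is correct and follows essentially the same route as the paper's proof: parallel transport of rational classes over a cover of the parameter space, Hodge loci cut out by the vanishing of the Hodge-bundle projections, the previous Lemma applied to a flat rational class that stays of type $(m,m)$ everywhere to kill its primitive part, and countability coming from rationality of the classes. Your closing worry about first-order behavior (non-vanishing of the differential of the period map along $V_\lambda$) is unnecessary: the paper's argument only needs the dichotomy that the locus $\bar{\mathcal Z}_\alpha$ either equals the whole connected base, in which case the infinitesimal Lemma applies, or is a proper closed analytic subscheme, which automatically has positive codimension.
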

\begin{proof}
Let $\bar {\mathcal Z}$ be the universal cover of ${\mathcal Z}$. On it the (pullback of the) local system
$\mathscr H^{d-1}$ is trivial. Given a class $\alpha\in H^{m,m}(X)$
we can extend it to a global section of $\mathscr H^{d-1}$ by parallel transport using
the Gauss-Manin connection. Define the subset $\bar {\mathcal Z}_\alpha$ of $\bar {\mathcal Z}$ as the common zero locus of the sections $\pi_p(\alpha)$ of $\mathscr E^{p,d-1-p}$ for $p\ne m$ (i.e., the locus where $\alpha$ is of type $(m,m)$).

If $\bar {\mathcal Z}_\alpha=\bar {\mathcal Z}$ we are done because $\alpha$ is in
$H^{d-1}_T(X)$ hence is in the image of $i^\ast$ by the previous Lemma.
If $\bar {\mathcal Z}_\alpha \ne \bar {\mathcal Z}$, we note that $\bar {\mathcal Z}_\alpha$ is a subscheme of  $\bar {\mathcal Z}$.

We subtract from ${\mathcal Z}$ the union of  the projections of the
 subschemes $\bar {\mathcal Z}_\alpha$  where $\bar {\mathcal Z}_\alpha \ne \bar {\mathcal Z}$. The set of these varieties is countable because we are considering rational classes.
\end{proof}

\noindent {\em Proof of Theorem \ref{picard}}.  Lemma \ref{verygeneral}, for $d=3$, implies that
$H^{1,1}(X_z,\Q)$ and $H^{1,1}(\var,\Q)$  have the same dimension for a very general $z$. These  two numbers are the Picard numbers of $X_z$ and $\var$, respectively (see Definition/Proposition \ref{picards}). \qed

We assume now that $\var$ is Fano, and that $L =  -  K_{\var }$, so that the hypersurfaces
in the linear system $\vert L \vert $ are K3 surfaces.
We have   $PH^{2,0}\simeq R(f)_{0} \simeq \C$,
$PH^{1,1} (X) \simeq R(f)_{\beta}$, and $T_X\mathcal M_{\beta} \simeq R(f)_\beta$, where $\beta = - \deg K_{\var }$.
By Propositions \ref{isoring} and \ref{gammaisring}, the morphism $\gamma_2$ corresponds to the multiplication $R(f)_\beta\otimes R(f)_0\to R(f)_\beta$, and since $R(f)_0\simeq\C$, this is an isomorphism.
From Theorem \ref{picard} we have:

 \begin{thm}  Let $\var$ be a 3-dimensional Fano complete simplicial toric variety,  and $X$ a very general  hypersurface in the linear system $\vert -K_{\var}\vert$.
 Then $X$ has the same Picard number as $\var$.
\end{thm}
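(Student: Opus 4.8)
The plan is to derive this statement as an immediate consequence of Theorem~\ref{picard}. Since $\var$ is Fano, the line bundle $L = -K_{\var}$ is ample, so $\vert -K_{\var}\vert$ is an ample linear system whose very general member $X$ is a quasi-smooth K3 surface. All the hypotheses of Theorem~\ref{picard} are then in force except for the surjectivity of $\gamma_2$, and the entire task reduces to verifying that single condition for this choice of $L$.

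First I would pin down the graded pieces of the Jacobian ring that enter $\gamma_2$. Here $d = 3$ and $\beta = \deg L = -\deg K_{\var}$, so in the notation of Proposition~\ref{isoring} one has $\beta_0 = -\deg K_{\var} = \beta$. As $d/2 - 1 = 1/2$ is not an integer, that Proposition applies for every $p$: taking $p = 2$ gives $PH^{2,0}(X) \simeq R(f)_{(d-p)\beta - \beta_0} = R(f)_{\beta - \beta} = R(f)_0$, and taking $p = 1$ gives $PH^{1,1}(X) \simeq R(f)_{2\beta - \beta} = R(f)_\beta$. Combined with $T_X\mathcal M_\beta \simeq R(f)_\beta$ and the observation that $R(f)_0 \simeq \C$ is spanned by the class of $1$ (in agreement with $PH^{2,0} \simeq \C$ for a K3 surface), this identifies all three spaces occurring in $\gamma_2$.

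Next I would invoke Proposition~\ref{gammaisring} to identify $\gamma_2$, under these isomorphisms, with the ring multiplication $R(f)_\beta \otimes R(f)_0 \to R(f)_\beta$. Because $R(f)_0 = \C\cdot 1$, this map is simply the canonical isomorphism $R(f)_\beta \otimes \C \xrightarrow{\ \sim\ } R(f)_\beta$; hence $\gamma_2$ is an isomorphism, in particular surjective. Applying Theorem~\ref{picard} then yields $\rho(X) = \rho(\var)$ for very general $X \in \vert -K_{\var}\vert$, which is the claim.

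I do not expect a genuine obstacle in this final step, which is pure bookkeeping with graded components; all the difficulty has already been absorbed into the earlier results, namely Theorem~\ref{picard} together with the Jacobian-ring descriptions of Propositions~\ref{isoring} and~\ref{gammaisring}. The only role played by the Fano, anticanonical hypothesis is to force $\beta = \beta_0$, and it is precisely this equality that collapses $PH^{2,0}(X)$ onto the one-dimensional space $R(f)_0$ and thereby makes the surjectivity of $\gamma_2$ automatic rather than a condition to be imposed.
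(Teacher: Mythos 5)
Your proposal is correct and follows essentially the same route as the paper: both deduce the theorem from Theorem~\ref{picard} by using Propositions~\ref{isoring} and~\ref{gammaisring} to identify $\gamma_2$ with the multiplication $R(f)_\beta\otimes R(f)_0\to R(f)_\beta$, where the anticanonical hypothesis forces $\beta=\beta_0$ and hence $PH^{2,0}(X)\simeq R(f)_0\simeq\C$, making $\gamma_2$ an isomorphism. Your added checks (that $p\ne d/2-1$ holds for all $p$ when $d=3$, and that quasi-smoothness holds for the very general member) are details the paper leaves implicit, but the argument is the same.
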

\

\bigskip

\bigskip\frenchspacing

\def\cprime{$'$}

\end{document}